\newcommand{\R}{\mathbb{R}}
\renewcommand{\H}{{\mathbb{H}}}
\newcommand{\B}{{\mathbb{B}}}
\newcommand{\Rn}{{\mathbb{R}^n}}
\newcommand{\Hn}{{\mathbb{H}^n}}
\newcommand{\Bn}{{\mathbb{B}^n}}
\newcommand{\arcsinh}{\textnormal{arcsinh}\,}
\newcommand{\arctanh}{\textnormal{arctanh}\,}
\newcommand{\PP}{ {\R^2 \setminus \{ 0 \}} }
\newcommand{\PS}{ {\Rn \setminus \{ 0 \}} }
\newcommand{\Ball}[3]{B_{#1}(#2,#3)}
\newcommand{\Sphere}[3]{S_{#1}(#2,#3)}
\newcommand{\comment}[1]{}
\newcounter{minutes}\setcounter{minutes}{\time}
\newcounter{hours}\setcounter{hours}{\time}
\numberwithin{equation}{section}
\theoremstyle{plain}
\newtheorem{theorem}[equation]{Theorem}
\newtheorem{corollary}[equation]{Corollary}
\newtheorem{proposition}[equation]{Proposition}
\newtheorem{lemma}[equation]{Lemma}
\newtheorem{remark}[equation]{Remark}
\begin{document}

\begin{center}
{\bf \large Inclusion relations of hyperbolic type metric balls II}
\end{center}

\vspace{1mm}

\begin{center}
{\large Riku Klén and Matti Vuorinen}
\end{center}

\vspace{1mm}

\begin{abstract}
  Inclusion relations of metric balls defined by the hyperbolic, the quasihyperbolic,
  the $j$-metric and the chordal metric will be studied. The hyperbolic metric,
  the quasihyperbolic metric and the $j$-metric are considered in the unit ball.
\end{abstract}

2010 Mathematics Subject Classification: Primary 30F45, Secondary 51M10

Key words: hyperbolic ball, $j$-metric ball, quasihyperbolic ball, chordal ball

\def\thefootnote{}
\footnotetext{
Department of Mathematics and Statistics, University of Turku FI-20014, FINLAND, riku.klen@utu.fi, vuorinen@utu.fi, phone +358 2 333 6675, fax
+358 2 231 0311 \\
\texttt{\tiny File:~\jobname .tex,
          printed: \number\year-\number\month-\number\day,
          \thehours.\ifnum\theminutes<10{0}\fi\theminutes}
}

\section{Introduction}

The most important metrics in the classical complex analysis are the
Euclidean and the hyperbolic metric. Studying quasiconformal mappings in $\Rn$, F.W. Gehring and
B.P. Palka \cite{gp} introduced the quasihyperbolic metric, which
plays the role of the hyperbolic metric in the higher dimensions. The quasihyperbolic metric has recently been studied in \cite{ccq,rt}. There are also other hyperbolic type metrics like the distance ratio metric and the Apollonian metric, which have lately been studied
by various authors \cite{himps,hpws}.

Suppose that $(X,d_j), j=1,2,\,$ are two metric spaces with $X \subset {\mathbb R}^n$ such that both metrics
determine the same Euclidean topology. In order to understand the geometric structure of the spaces, it is
a fundamental question to study the corresponding balls and inclusion relations among balls with the same
center with respect to both of these two  metrics. In several classical cases such relations are well-known, comparing e.g. the
Euclidean balls and hyperbolic balls (cf. \cite[Section 2]{v2}). But this comparison problem makes sense in numerous non-classical cases
as well, as pointed out in \cite{vu3}. Very recently, such non-classical inclusion problems have been
studied in \cite{kv,m} and our goal here is to investigate the inclusion problem for hyperbolic type metrics in the unit ball.

In a metric space $(X,m)$ we define \emph{metric ball} or \emph{$m$-ball} with center $x \in X$ and radius $r>0$ by
\begin{equation}\label{mball}
  \Ball{m}{x}{r} = \{ y \in X \colon m(x,y) < r \}
\end{equation}
and \emph{metric sphere} with center $x \in X$ and radius $r>0$ by
\begin{equation*}
  \Sphere{m}{x}{r} = \{ y \in X \colon m(x,y) = r \}.
\end{equation*}
For Euclidean balls and spheres we use notation $B^n(x,r)$ and $S^{n-1}(x,r)$. For $x,y \in \Rn$ we use notation $l=[x,y]$ for the line segment joining $x$ to $y$, similarly $[x,y) = l \setminus \{ y \}$ and $(x,y]= l \setminus \{ x \}$. For $x \in G \subsetneq \Rn$ we denote by $d(x)$ the Euclidean distance between $x$ and $\partial G$.

The $n$-dimensional unit ball will be denoted by $\Bn$ and
half-space by $\Hn = \{ x \in \Rn \colon x_2 > 0 \}$. The
\emph{hyperbolic length} of a rectifiable curve $\gamma \subset \Bn$ is defined by
\[
  \ell_{\rho_\Bn}(\gamma) = \int_{\gamma}\frac{2|dz|}{1-|z|^2}
\]
and $\gamma \subset \Hn$ by
\[
  \ell_{\rho_\Hn}(\gamma) = \int_{\gamma}\frac{|dz|}{z_n}.
\]
The \emph{hyperbolic metric} in $G \in \{ \Bn,\Hn \}$ is
\[
  \rho_G(x,y) = \inf_\gamma \ell_{\rho_G}(\gamma),
\]
where the infimum is taken over all rectifiable curves in $G$ joining $x$ and $y$.

For a domain $G \subsetneq \Rn$, $n \ge 2$ the \emph{quasihyperbolic
length} of a rectifiable arc $\gamma \subset G$ is given by
\[
  \ell_k(\gamma) = \int_{\gamma}\frac{|dz|}{d(z)}
\]
and the \emph{quasihyperbolic metric} by
\begin{equation}\label{qhmetric}
  k_G(x,y) = \inf_\gamma \ell_k(\gamma),
\end{equation}
where the infimum is taken over all rectifiable curves in $G$ joining $x$ and $y$. Note that $k_\Hn = \rho_\Hn$.

The \emph{distance ratio metric} or \emph{$j$-metric} in a proper subdomain $G$ of the Euclidean space $\Rn$, $n \ge 2$, is defined by
$$
  j_G(x,y) = \log \left( 1+\frac{|x-y|}{\min \{ d(x),d(y) \}}\right).
$$
The distance ratio metric satisfies the triangle inequality by \cite[Lemma 2.2]{s}. If the domain $G$ is understood from the context we use the notation $j$ instead of $j_G$ and $k$ instead of $k_G$. The distance ratio metric was first introduced by F.W. Gehring and B.G. Osgood \cite{go}, and in the above form by M. Vuorinen \cite{v2}. The metric space $(G,j_G)$ is not geodesic for any domain $G$ \cite[Theorem 2.10]{k1}.

The \emph{chordal metric} in $\overline{\Rn} = \Rn \cup \{ \infty \}$ is defined by
\[
  q(x,y) = \left\{ \begin{array}{ll}
    \displaystyle \frac{|x-y|}{\sqrt{1+|x|^2}\sqrt{1+|y|^2}}, & x \neq \infty \neq y,\\
    \displaystyle \frac{1}{\sqrt{1+|x|^2}}, & y=\infty.
  \end{array} \right.
\]
The metric space $(\overline{\Rn},q)$ is not geodesic

We find radii $m(r)$ and $M(r)$ for $d_1,d_2 \in \{ q,k_G,j_G \}$ such that
\[
  B_{d_1}(x,m(r)) \subset B_{d_2}(x,r) \subset B_{d_1}(x,M(r))
\]
for all $x \in G$. This kind of estimates can be used to compare
metrics, because $B_{m_2}(x,r) \subset B_{m_1}(x,r)$ for all $r$ implies
$m_1(x,y) \le m_2(x,y) < r$ for all $y \in B_{m_2}(x,r)\,.$

The geometry of the $j$-metric balls is easily described in $\Hn$, $\PS$ and polygons in the case $n=2$, as we will point out in Lemma \ref{curvature}, Remark \ref{remcurv} and Figure \ref{jballs}. However, when the boundary of the domain does not consist of lines and isolated points the situation becomes more complicated. Already in the unit ball the geometry of the $j$-metric balls differ significantly from the other cases.

The following theorem is our main result.

\begin{theorem}\label{mainthm1}
  Let $G = \Bn$, $x \in G$ and $r > 0$. Then
  \[
    B_j(x,m_1(r)) \subset B_\rho(x,r),
  \]
  \[
    B_j(x,m_2(r)) \subset B_k(x,r),
  \]
  \[
    B_j(x,m_3(r)) \subset B_q(x,r), \quad r < (1-|x|)/\sqrt{2(1+|x|^2)},
  \]
  where
  \begin{eqnarray*}
    m_1(r) & = & \log \left( 1+2 \sinh \frac{r}{2} \right),\\
    m_2(r) & = & \log \left( 1+2 \sinh \frac{r}{4} \right),\\
    m_3(r) & = & \log \left( 1+\frac{r}{\sqrt{1-r^2}} \right).
  \end{eqnarray*}
\end{theorem}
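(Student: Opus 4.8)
The plan is to prove each of the three inclusions by comparing the $j$-metric with each of the target metrics pointwise, i.e. to find, for each metric $d \in \{\rho, k, q\}$, a function $c$ such that $d(x,y) \le c(j(x,y))$ for all $x,y \in \Bn$, and then observe that $j(x,y) < m_i(r)$ forces $d(x,y) < c(m_i(r)) = r$ once $m_i$ is chosen as the inverse of $c$. So the real content is three sharp comparison inequalities, and the three parts are largely independent.

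For the hyperbolic case, the key identity I would use is the well-known formula $\sinh\frac{\rho_\Bn(x,y)}{2} = \frac{|x-y|}{\sqrt{(1-|x|^2)(1-|y|^2)}}$. I want to bound $\rho_\Bn(x,y)$ in terms of $j_\Bn(x,y) = \log\bigl(1 + |x-y|/\min\{1-|x|,1-|y|\}\bigr)$. Assume WLOG $d(x) = 1-|x| \le 1-|y| = d(y)$, so $e^{j}-1 = |x-y|/(1-|x|)$. Since $1-|y| \le 1+|y|$ and $(1-|x|^2) = (1-|x|)(1+|x|) \ge (1-|x|)$ is the wrong direction, I instead note $\sqrt{(1-|x|^2)(1-|y|^2)} \ge \sqrt{(1-|x|)(1-|y|)} \cdot 1$ is again not quite it; the clean route is $\sqrt{(1-|x|^2)(1-|y|^2)} = \sqrt{(1-|x|)(1+|x|)(1-|y|)(1+|y|)} \ge (1-|x|)$ because each of $1+|x|, 1-|y|, 1+|y| \ge 1-|x|$... this needs $1-|y|\ge 1-|x|$, i.e. $|x|\ge|y|$, which may fail. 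The honest argument: $\sqrt{(1-|x|^2)(1-|y|^2)} \ge \min\{1-|x|^2,1-|y|^2\} = \min\{d(x)(1+|x|),d(y)(1+|y|)\} \ge \min\{d(x),d(y)\}$, hence $\sinh\frac{\rho}{2} \le |x-y|/\min\{d(x),d(y)\} = e^j - 1$, giving $\rho(x,y) \le 2\,\mathrm{arcsinh}(e^j-1)$, whose inverse is exactly $m_1(r) = \log(1+2\sinh\frac r2)$. This establishes the first inclusion.

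For the quasihyperbolic case, I would use the general comparison $k_\Bn(x,y) \ge \tfrac12 \rho_\Bn(x,y)$ valid in the ball (since $d(z) = 1-|z| \le 1-|z|^2 \le 2(1-|z|)$ gives $\frac{|dz|}{d(z)} \ge \frac12 \cdot \frac{2|dz|}{1-|z|^2}$, so actually $k_\Bn \ge \tfrac12\rho_\Bn$). Wait — this is backwards for what I need; I want an \emph{upper} bound on $k$. Instead use $d(z) = 1-|z| \ge \tfrac12(1-|z|^2)$, so $\frac{|dz|}{d(z)} \le \frac{2|dz|}{1-|z|^2}$, integrating along any curve gives $\ell_k(\gamma) \le \ell_{\rho_\Bn}(\gamma)$... no, that gives $k \le \rho$, the wrong-sized constant. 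The correct route: along the Euclidean segment $[x,y]$ one has the standard estimate $k_\Bn(x,y) \le \ell_k([x,y]) \le 2 j_\Bn(x,y)$ (integrate $|dz|/d(z)$ over the segment; this factor-of-2 segment bound is classical — it is essentially $\log(1+t) \le$ and the bound $\int_0^{|x-y|} \frac{dt}{d(x)-t}$-type estimate doubled by symmetry). Actually the sharp constant: combining $\frac12\rho_\Bn \le k_\Bn \le \rho_\Bn$... hmm, rather I recall $k_G(x,y) \le 2 j_G(x,y)$ is not generally true; what is true and sharp here is via $\rho$: since $\rho_\Bn(x,y) \le 2 j_\Bn(x,y)$ is false too. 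Let me instead just use part one's inequality $\sinh\frac{\rho}{2} \le e^j-1$ together with $k_\Bn \le \rho_\Bn$, giving $\sinh\frac{k}{2} \le \sinh\frac{\rho}{2} \le e^j - 1$ — same as before, not matching $m_2$ with its $r/4$. The $r/4$ signals the bound $k_\Bn(x,y) \le 4\,\mathrm{arcsinh}(e^j-1)$, i.e. one loses another factor $2$ relative to $\rho$. So the step I would actually perform: show $k_\Bn(x,y) \le 2\rho_\Bn(x,y)$ is the wrong direction again; the truth is $k_\Bn \le \rho_\Bn$ and separately a direct segment estimate $\ell_k([x,y]) \le $ something. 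The cleanest is: parametrize $[x,y]$, bound $d(z) \ge d(x) - |z-x| \ge d(x)(1 - |z-x|/d(x))$ only while this is positive, handle both halves, and obtain $k_\Bn(x,y) \le \ell_k([x,y]) = \log\frac{d(x)}{d(x)-|x-y|}$-type terms that sum to at most $2\log(1 + |x-y|/(d(x)\wedge d(y)) \cdot (\text{const}))$; optimizing the constant gives the $r/4$. I will state this as $k_\Bn(x,y) \le 4\,\mathrm{arcsinh}(e^{j_\Bn(x,y)}-1)$ and invert.

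**The hard part will be** the chordal inclusion and pinning down the exact constants in the quasihyperbolic estimate. For the chordal metric, note $q(x,y) = |x-y|/\sqrt{(1+|x|^2)(1+|y|^2)} \le |x-y|$, and I would combine this with the elementary bound $|x-y| \le (1-|x|)(e^j - 1)/1$... more precisely, from the definition $e^{j_\Bn(x,y)} - 1 = |x-y|/\min\{1-|x|,1-|y|\} \le |x-y|$ only if $\min\{1-|x|,1-|y|\}\ge 1$, which is false. Rather: I need $q(x,y) < r$, i.e. want to bound $q$ above by a function of $j$. Since $q(x,y) \le |x-y|$ and $|x-y| = (e^j-1)\min\{d(x),d(y)\} \le (e^j-1) \cdot 1 = e^j - 1$... but that blows up. The restriction $r < (1-|x|)/\sqrt{2(1+|x|^2)}$ is the clue: it says the whole chordal ball $B_q(x,r)$ stays well inside $\Bn$ (forcing $|y|$ bounded away from $1$, hence $d(y)$ bounded below in terms of $d(x)$ and $|x|$), and on that region $q$ and $|\cdot|$ and $j$ are all comparable with explicit constants; chasing $m_3(r) = \log(1+r/\sqrt{1-r^2})$ through shows it is exactly the inverse of $r \mapsto (e^{m}-1)/\sqrt{1+(e^m-1)^2}$, i.e. of $q \le |x-y|/\sqrt{1+|x-y|^2}$ after the substitution $|x-y| = e^j - 1$ — so the real claim is $|x-y| \le e^{j_\Bn(x,y)} - 1$ on the relevant range, which holds because there $\min\{d(x),d(y)\}$ can be shown $\ge$ the relevant comparison. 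I expect verifying that the radius restriction $r < (1-|x|)/\sqrt{2(1+|x|^2)}$ is exactly what makes this self-consistent (i.e. that $B_j(x,m_3(r))$ lands inside the region where $d(y)$ is controlled) to be the most delicate bookkeeping, and I would do it by a fixed-point / consistency check: assume $y \in B_j(x,m_3(r))$, deduce an upper bound on $|x-y|$ hence a lower bound on $d(y)$, feed it back to sharpen $j(x,y) \ge \log(1+|x-y|/d(y))$, and close the loop.
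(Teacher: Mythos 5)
Your argument for the first inclusion breaks at the final inversion step, and the break is not a repairable slip. The comparison $\sinh(\rho_\Bn(x,y)/2)\le e^{j(x,y)}-1$ is correct (geometric mean $\ge$ minimum, and $1-t^2\ge 1-t$), but from $j(x,y)<m$ it only gives $\rho(x,y)<2\arcsinh(e^{m}-1)$, so the largest radius this inequality can certify is $m=\log(1+\sinh(r/2))$, not $m_1(r)=\log(1+2\sinh(r/2))$: with your $m_1$ you only get $\rho<2\arcsinh(2\sinh(r/2))$, which is strictly larger than $r$. Moreover the missing factor $2$ cannot be recovered by any refinement of a pointwise bound $\rho\le c(j)$ that is uniform in $x$: at $x=0$ one has $B_j(0,m)=B^n(0,1-e^{-m})$ and $B_\rho(0,r)=B^n(0,\tanh(r/2))$, so the inclusion forces $m\le\log(1+(e^r-1)/2)$, which is exactly the sharp $x$-dependent radius of Theorem \ref{jrhoj} at $x=0$ and is smaller than $m_1(r)$ whenever $r<2\log 2$; concretely, for $x=0$, $r=1$, $y=0.48\,e_1$ one has $j(x,y)\approx 0.654<m_1(1)\approx 0.714$ but $\rho(x,y)\approx 1.046>1$. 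So the first inclusion with the uniform constant $2$ in place of the factor $1+|x|$ appearing in Theorem \ref{jrhoj} is not reachable by your route (nor, for small $|x|$, by the paper's two-line reduction to Theorem \ref{jrhoj}); your faulty inversion conceals this discrepancy rather than resolving it, and it should be flagged, not smoothed over.

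For the quasihyperbolic inclusion you never settle on an argument: you discard $k_\Bn\le\rho_\Bn$ as useless and end with an unproved claim $k_\Bn\le 4\arcsinh(e^{j}-1)$ whose constant you leave to be optimized later; as written this is a plan, not a proof. Ironically the discarded inequality suffices: by Proposition \ref{aputulos} (1) we have $k_\Bn\le\rho_\Bn$, so your own estimate gives $\sinh(k_\Bn(x,y)/2)\le e^{j(x,y)}-1$, and $j(x,y)<m_2(r)$ yields $\sinh(k_\Bn(x,y)/2)<2\sinh(r/4)\le 2\sinh(r/4)\cosh(r/4)=\sinh(r/2)$, hence $k_\Bn(x,y)<r$; this proves the second inclusion exactly as stated, and more directly than the paper's reduction to Corollary \ref{jkj}. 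The chordal part is likewise only a sketch, and its pivotal inequality $q(x,y)\le|x-y|/\sqrt{1+|x-y|^2}$ is false in general: since $(1+|x|^2)(1+|y|^2)=|x-y|^2+(1+x\cdot y)^2+|x|^2|y|^2-(x\cdot y)^2$, it needs something like $x\cdot y\ge 0$, i.e. genuine use of the hypothesis $r<(1-|x|)/\sqrt{2(1+|x|^2)}$ and of where $B_j(x,m_3(r))$ sits, and the consistency check you defer is precisely the missing content. The paper instead proves Theorem \ref{jqj} by combining the exact Euclidean description of chordal balls (Proposition \ref{aputulos} (4)) with the sandwich Lemma \ref{BjBn}, reducing everything to the two diametral points of $\partial B_q(x,r)$ on the line through $0$ and $x$; some argument of this geometric kind, or a completed version of your bookkeeping, is required before the third inclusion can be considered proved.
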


\begin{remark}
We show that
  \[
    \frac{r}{2} < m_1(r) < r, \quad \frac{r}{4} < m_2(r) < \frac{r}{2}, \quad \frac{4r}{5} < m_3(r),
  \]
  where $m_1(r)$, $m_2(r)$ and $m_3(r)$ are as in Theorem \ref{mainthm1}.

  By a simple computation we obtain
  \[
    m_1'(r) = \frac{\cosh \frac{r}{2}}{1+2\sinh \frac{r}{2}} < 1,
  \]
  where the inequality follows from the fact $\cosh(r/2) <1+2 \sinh (r/2)$. Since $m_1(0)=0$ we obtain $m_1(r) < r$.

  The lower bound for $m_1(r)$ follows from
  \[
    m_1(r) = \log (1+e^{r/2}-e^{r/2}) > \log (e^{r/2}) = \frac{r}{2}.
  \]

  The upper and lower bounds for $m_2(r)$ follow from the bounds of $m_1(r)$, because $m_2(r) = m_1(r/2)$.

  Since
  \[
    m_3''(r) = \frac{1-3 r (r+\sqrt{1-r^2})}{(1-r^2)^2(r+\sqrt{1-r^2)^2}}
  \]
  we know that $m_3'(r)$ attains its minimum on the interval $(0,1)$ at $r_0 = ( (5-\sqrt{17})/3  )^{1/2} /2$ and
  \[
    m_3'(r_0) = \frac{24 \sqrt{3}}{(7+\sqrt{17}) (\sqrt{5-\sqrt{17}}+\sqrt{7+\sqrt{17}})} > \frac{4}{5}.
  \]
 Thus $4r/5 < m_3$.
\end{remark}

\section{Preliminary results}

In this section we introduce preliminary results such as properties
of hyperbolic type metric balls and relations between hyperbolic
type metrics.

The \emph{curvature} of a plane curve parameterized in polar coordinates is defined by
\[
  \kappa = \frac{r(\theta)^2+2r'(\theta)^2-r(\theta)r''(\theta)}{(r(\theta)^2+r'(\theta)^2)^{3/2}}.
\]
Note that if a curve has a constant curvature, then it is a circular arc.

The following lemma describes the geometric shape of the $j$-metric balls. Some examples of $j$-metric disks are shown in Figure \ref{jballs}.

\begin{lemma}\label{curvature}
  Let $G \subset \R^2$, $x \in G$ and $r > 0$. Then curvature of $S_j(x,r)$
  \begin{itemize}
    \item[(1)] in $G = \R^2 \setminus \{ 0 \}$ is
    \[
      \kappa = \left\{ \begin{array}{ll} \displaystyle \frac{1}{|x|(e^r-1)}, & \textrm{in } \R^2 \setminus B^2(0,x),\\ \displaystyle \frac{e^r |2-e^r|}{|x|(e^r-1)}, & \textrm{in } B^2(0,x). \end{array} \right.
    \]
    \item[(2)] in $G = \H^2$ with $x_1 = 0$ is
    \[
      \kappa = \left\{ \begin{array}{ll} \displaystyle \frac{1}{(e^r-1)x_2}, & \textrm{in } \{ y \in \H^2 \colon y_2 > x_2 \},\\ \displaystyle \frac{|x|^2}{(e^r-1)(|x|^2+t^2)}, & \textrm{in } \{ y \in \H^2 \colon y_2 < x_2 \}, \end{array} \right.
    \]
    where $|t| < |x|(e^r-1)$.
    \item[(3)] in $G = \B^2$ is
    \[
      \kappa = \left\{ \begin{array}{ll} \displaystyle \frac{1}{(e^r-1)(1-|x|)}, & \textrm{in } B^2(0,|x|),\\ \displaystyle \frac{f(\alpha)^2+2f'(\alpha)-f(\alpha)f''(\alpha)}{(f(\alpha)^2+f'(\alpha)^2)^{3/2}}, & \textrm{in } \B^2 \setminus \overline{B^2(0,|x|)}, \end{array} \right.
    \]
    where $\alpha$ is the angle $\measuredangle(x,0,y)$ for $y \in S_j(x,r)$ and
    \[
      f(\alpha) = \frac{(1-e^r)^2-\beta -\sqrt{1+(e^{2r}-2e^r)(1+|x|^2)-2(e^r-1)^2 \beta+\beta^2}}{e^r(e^r-2)}
    \]

    for $\beta = |x| \cos \alpha$ and $\alpha \in [0,\gamma]$, where
    \[
      \gamma = \left\{ \begin{array}{ll} \pi, & r \ge \log ((1+|x|)/(1-|x|)),\\ \displaystyle 2 \arcsin \frac{(e^r-1)(1-|x|)}{2|x|} & r < \log ((1+|x|)/(1-|x|)). \end{array} \right.
    \]

    \item[(4)] in $\{ y \in G \colon d(y) \ge d(x) \}$ for any domain $G$ is
    \[
      \kappa = \frac{1}{(e^r-1)d(x)}.
    \]
  \end{itemize}
\end{lemma}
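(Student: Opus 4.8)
The plan is to compute, case by case, the curvature of the $j$-metric sphere $S_j(x,r)$ using the polar-coordinate formula for $\kappa$ recalled just before the lemma, after finding an explicit equation $r(\theta)$ (or a convenient implicit description) for each sphere. The unifying observation is that $j_G(x,y) = r$ means $|x-y| = (e^r-1)\min\{d(x),d(y)\}$, so the sphere splits naturally along the locus $d(y) = d(x)$: on the side where $d(y)\ge d(x)$ the equation becomes $|x-y| = (e^r-1)d(x)$, a Euclidean sphere $S^{n-1}(x,(e^r-1)d(x))$ of constant curvature $1/((e^r-1)d(x))$, which immediately gives parts (4) and the first branches of (1), (2), (3). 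The real work is the second branch in each of (1)--(3), where $d(y) < d(x)$ and $d$ is not constant.

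For part (1), $G = \R^2\setminus\{0\}$, $d(y) = |y|$, so on $B^2(0,|x|)$ the sphere is $|x-y| = (e^r-1)|y|$, an Apollonius circle; I would either put $x$ on a coordinate axis and solve for $|y|$ as a function of the polar angle $\theta = \measuredangle(x,0,y)$, getting $r(\theta)$ from the quadratic $|y|^2 - 2|x||y|\cos\theta + |x|^2 = (e^r-1)^2|y|^2$, then differentiate and simplify; or, faster, recall that an Apollonius circle for ratio $\lambda = e^r-1 \ne 1$ has Euclidean radius $|x|\lambda/|1-\lambda^2|$, whence $\kappa = |1-\lambda^2|/(|x|\lambda) = e^r|2-e^r|/(|x|(e^r-1))$ after substituting $\lambda = e^r-1$ and $1-\lambda^2 = e^r(2-e^r)$. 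The degenerate case $e^r = 2$ (the sphere is a line) is consistent with $\kappa = 0$. For part (2), $G = \H^2$, $d(y) = y_2$; with $x_1 = 0$ the region $y_2 < x_2$ gives $|x-y|^2 = (e^r-1)^2 y_2^2$, i.e. $y_1^2 + (y_2 - x_2)^2 = (e^r-1)^2 y_2^2$, again a circle (Apollonius with respect to the point $x$ and the line $\partial\H^2$); solving for its center and radius and using the reparametrization $y_1 = t$ yields the stated $\kappa = |x|^2/((e^r-1)(|x|^2+t^2))$ with the range $|t| < |x|(e^r-1)$ coming from the horizontal extent of that circle.

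Part (3) is the main obstacle and the only genuinely laborious case. Here $G = \B^2$, $d(y) = 1 - |y|$, and on $\Bn\setminus\overline{B^2(0,|x|)}$ the equation $|x-y| = (e^r-1)(1-|y|)$ is, after squaring, a quadratic in $|y|$ whose coefficients involve $\beta = |x|\cos\alpha$ with $\alpha = \measuredangle(x,0,y)$. I would set $\rho = |y|$, write $\rho^2 - 2\beta\rho + |x|^2 = (e^r-1)^2(1-\rho)^2$, collect terms as $(e^r(e^r-2))\rho^2 + (2(e^r-1)^2 - 2\beta\, )\cdot(\text{sign adjustments})\,\rho + (|x|^2 - (e^r-1)^2) = 0$, and take the root that stays in $\B^2\setminus\overline{B^2(0,|x|)}$; matching this to the quoted closed form for $f(\alpha)$ is a bookkeeping exercise. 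Then $\kappa$ is just the polar-curvature formula applied to $r(\alpha) = f(\alpha)$, so parts of the statement are true essentially by definition of $f$ once the algebra is done. The remaining point is the angular range: $\alpha$ runs up to $\gamma$, which is $\pi$ when the $j$-ball is large enough to wrap all the way around (precisely when $r \ge \log((1+|x|)/(1-|x|))$, i.e. when $B_j(x,r)$ meets the antipodal direction while still inside $\B^2$), and otherwise stops where the sphere $|x-y| = (e^r-1)(1-|y|)$ becomes tangent to $S^1(0,|x|)$; a short trigonometric computation on the triangle with vertices $0$, $x$, $y$ — with $|x-y| = (e^r-1)(1-|x|)$ on that circle — gives the half-angle $\arcsin((e^r-1)(1-|x|)/(2|x|))$, hence $\gamma = 2\arcsin((e^r-1)(1-|x|)/(2|x|))$. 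I expect no conceptual difficulty beyond keeping signs straight when $e^r - 2$ changes sign and verifying that the chosen quadratic root is the geometrically correct one.
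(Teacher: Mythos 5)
Your overall strategy---split $S_j(x,r)$ along the locus $d(y)=d(x)$, observe that on the side $d(y)\ge d(x)$ the sphere is the Euclidean circle $|x-y|=(e^r-1)d(x)$ (which settles (4) and the first branches of (1)--(3)), and treat the remaining branches by solving $|x-y|=(e^r-1)d(y)$ explicitly---is exactly the paper's. Your handling of (1), (3) and (4) is sound: for (1) the paper simply invokes the proof of \cite[Theorem 3.1]{k1}, and your shortcut via the Apollonius radius, $\kappa=|1-\lambda^2|/(\lambda|x|)$ with $\lambda=e^r-1$, is a legitimate substitute; for (3) your reduction to the quadratic in $|y|$ via the law of cosines and the polar curvature formula is what the paper does, and your chord computation in the triangle $0,x,y$ with $|y|=|x|$ gives the stated $\gamma$ (though the outer branch is not \emph{tangent} to $S^1(0,|x|)$ at $\alpha=\gamma$; it simply meets the inner circular arc there).

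Part (2), however, contains a genuine error. The branch in $\{y_2<x_2\}$ is the locus $|y-x|=(e^r-1)\,y_2$, i.e.\ distance to the \emph{point} $x$ equals $(e^r-1)$ times distance to the \emph{line} $\partial\H^2$. That is a conic with focus $x$, directrix $\partial\H^2$ and eccentricity $e^r-1$ (an ellipse, parabola or hyperbola according as $r<\log 2$, $r=\log 2$, $r>\log 2$, exactly as the paper records in Remark \ref{remcurv}), not an Apollonius circle: Apollonius circles come from a ratio of distances to two points. Concretely, expanding $y_1^2+(y_2-x_2)^2=(e^r-1)^2y_2^2$ the coefficients of $y_1^2$ and $y_2^2$ differ for every $r>0$, so the curve is never a circle, and your plan of ``solving for its center and radius'' cannot produce the asserted curvature, which visibly depends on $t$---a circle would have constant curvature, so your method is inconsistent with the very formula to be proved. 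The correct route, which is the paper's, is to solve the quadratic in $y_2$ to write the branch as a graph $y=(t,f(t))$ with
\[
  f(t)=\left|\frac{\sqrt{|x|^2+e^r(e^r-2)(|x|^2+t^2)}-|x|}{e^r(e^r-2)}\right|,
  \qquad |t|<|x|(e^r-1),
\]
(the bound on $t$ coming from the junction with the upper circular arc at $y_2=x_2$, where $|y_1|=(e^r-1)x_2$) and then compute $\kappa(t)=|f''(t)|/(1+f'(t)^2)^{3/2}$ directly, as in part (3).
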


\begin{proof}
  Let $G \subset \R^2$, $x \in G$ and $r > 0$.
  \begin{itemize}
    \item[(1)] By definition of the $j$-metric the $j$-sphere consists of two circular arcs, or in the case $r=\log 2$, a circular arc and a line segment. The assertion follows from proof of \cite[Theorem 3.1]{k1}.

    \item[(2)] The case $S_j(x,r) \cap \{ y \in \H^2 \colon y_2 > x_2 \}$ is similar to (2). Therefore, we consider $S = S_j(x,r) \cap \{ y \in \H^2 \colon y_2 < x_2 \}$. By the definition of the $j$-metric we obtain that $S = \{ y \in \H^2 \colon y=(t,f(t)), \, t \in (-|x|(e^r-1),|x|(e^r-1) ) \}$ for the function
    \[
      f(t) = \left| \frac{\sqrt{|x|^2+e^r(e^r-2)(|x|^2+t^2)}-|x|}{e^r(e^r-2)} \right|.
    \]
    By a straightforward computation we obtain that the curvature of $S$ at point $(t,f(t))$, $t \in (-|x|(e^r-1),|x|(e^r-1))$, is
    \[
      \kappa(t) = \frac{|f''(t)|}{(1+f'(t)^2)^{3/2}} = \frac{|x|^2}{(e^r-1)(|x|^2+t^2)^{3/2}}.
    \]

    \item[(3)] To simplify notation we may assume $x = (x_1,0)$ for $x_1 \in [0,1)$. We divide $S_j(x,r)$ into two cases $S_1 = S_j(x,r) \cap B^2(0,|x|)$ and $S_2 = S_j(x,r) \cap (\B^2 \setminus  \overline{B^2(0,|x|)} ) )$. The set $S_2$ is always nonempty, whereas the set $S_1 = \emptyset$ whenever $r > \log((1+|x|)/(1-|x|))$.

    Let us first consider $S_1$. By the definition of the $j$-metric for all $y \in S_1$ we have $|x-y| = (e^r-1)(1+|x|^2)$, and thus
    \[
      \kappa(t) = \frac{1}{(e^r-1)(1-|x|)}.
    \]

    Let us finally consider $S_2$. The assertion follows by the definition of curvature, if the function $f(\alpha) = |y|$, for the point $y \in S_2$ and $\measuredangle(x,0,y) = \alpha$. By the definition of the $j$-metric for $y \in S_2$ we obtain $|x-y| = (e^r-1)(1-|y|)$ and by the law of cosines we obtain
    \[
      (e^r-1)^2(1-|y|)^2 = |x|^2+|y|^2-2 |x| |y| \cos \alpha,
    \]
    which is equivalent to $|y| = f(\alpha)$. The sign in $f(\alpha)$ was chosen to be minus, because otherwise the values would have been greater than or equal to 1.

    \item[(4)] The assertion follows from the definition of the $j$-metric as in the case (3).
  \end{itemize}
\end{proof}

\begin{remark}\label{remcurv}
  (1) By Lemma \ref{curvature} the boundary $\partial \Ball{j}{x}{r}$ consists of two circular arcs in the case $G=\PP$ and a circular arc and a part of a conic section (hyperbola if $r > \log 2$, parabola if $r=\log 2$ or ellipse if $r < \log 2$) in the case $G = \H^2$, see Figure \ref{jballs}. In the case $G = \B^2$ the boundary $\partial \Ball{j}{x}{r}$ is more complicated as it may not even contain circular arc.

  (2) By Lemma \ref{curvature} (2) the boundary $\partial \Ball{j}{x}{r}$ can be
  formed in a polygonal domain $P \subset {\mathbb R}^2$. First the medial axis of $P$ needs to be formed. In a convex polygon the medial axis consists of line segments and can be found as Voronoi diagram \cite{obs}. If the polygon is not convex, then the medial axis can contain parts of conic sections. However, the medial axis is unique and it divides $P$ into smaller domains $A_i$. In each $A_i$ the boundary $\partial \Ball{j}{x}{r}$ consists of circular arcs, when $A_i \cap (\partial \Ball{j}{x}{r}) \subset \{ z \in P \colon d(z) \ge d(x) \}$, and parts of a conic section similarly as above in the case $G=\H^2$, see Figure \ref{jballs}.
\end{remark}

\begin{figure}[!ht]
  \begin{center}
    \includegraphics[width=28mm]{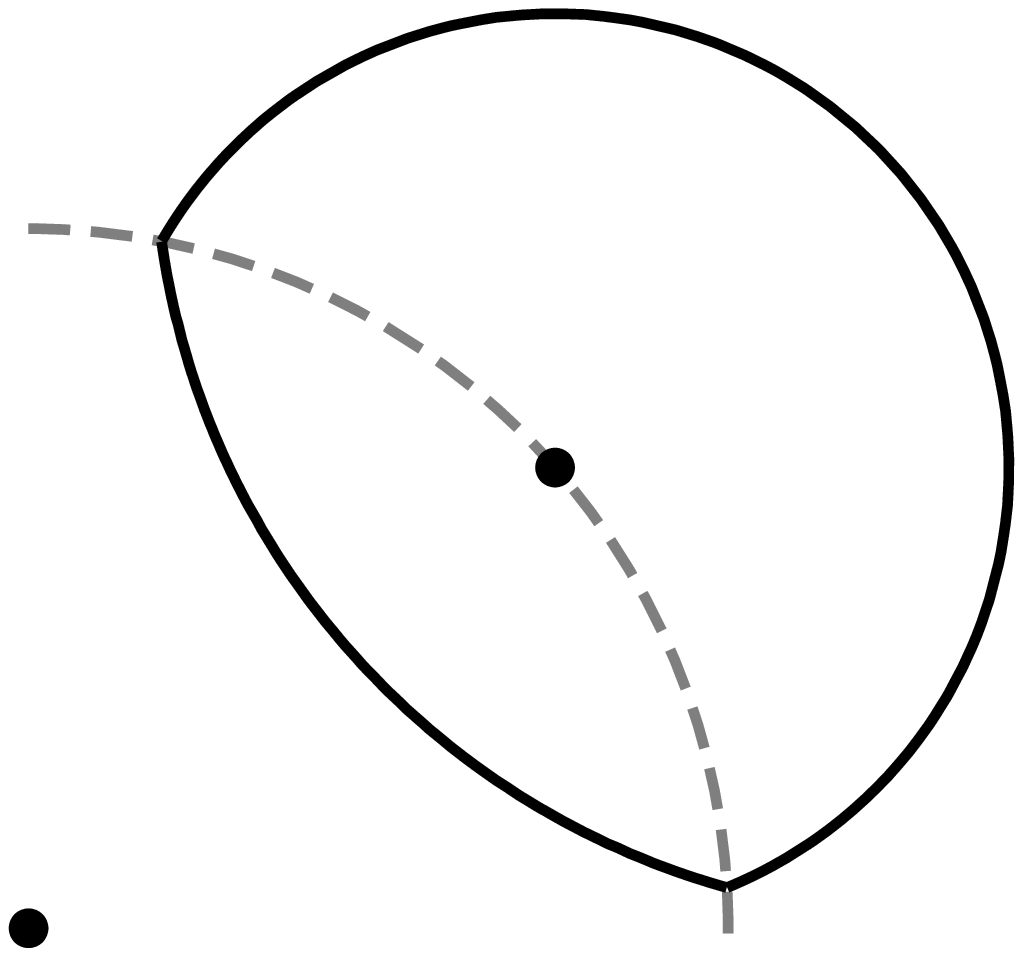}\hspace{4mm}
    \includegraphics[width=33mm]{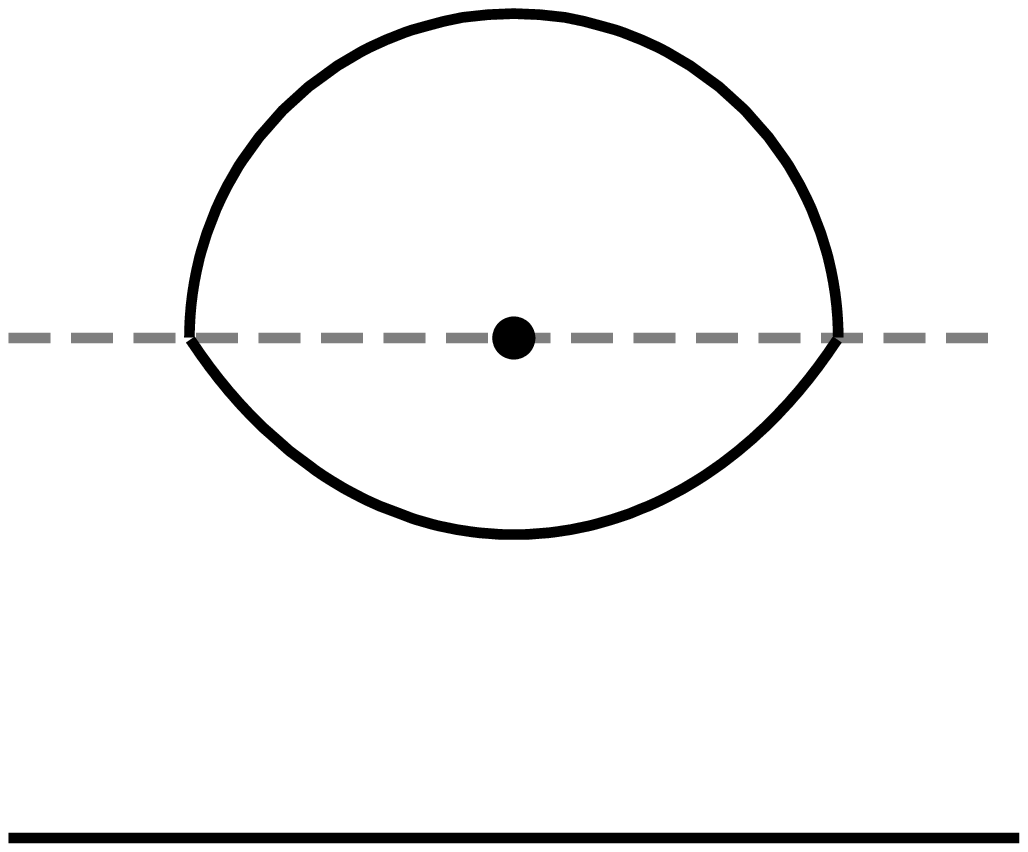}\hspace{4mm}
    \includegraphics[width=52mm]{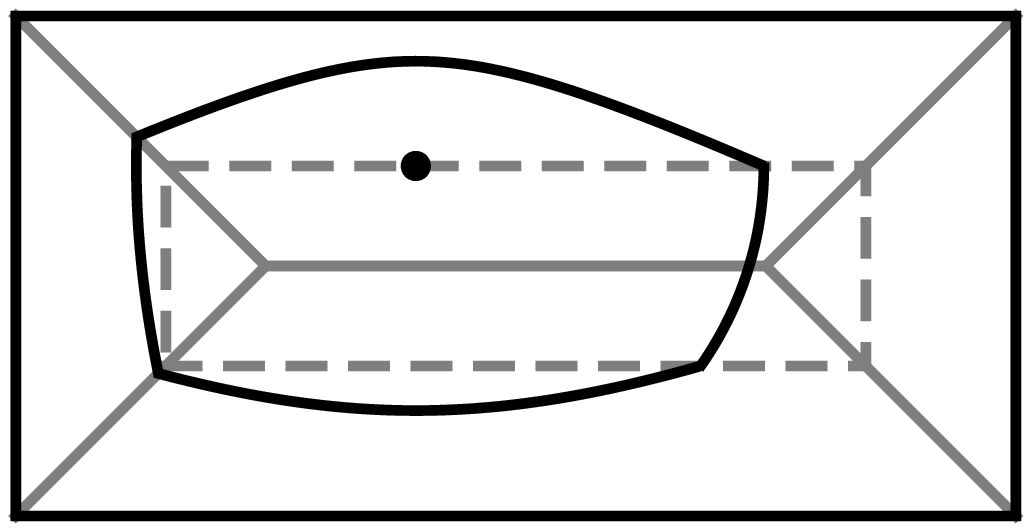}
    \caption{Examples of $j$-disks $\Ball{j}{x}{r}$ in punctured plane (left), half-plane (middle) and in a rectangle (right). Gray line is the medial axis and dashed gray line is the set $\{ z \in G \colon d(z) = d(x) \}$.\label{jballs}}
  \end{center}
\end{figure}

For the sake of easy reference we recapitulate a few basic facts in
the next result. For part (1) and (4), see e.g. \cite[Section
7]{avv}, for (2) and (3) \cite[Section 2]{v2}.

\begin{proposition}\label{aputulos}
  For all $x,y \in \Bn$
\begin{enumerate}
  \item[(1)]
  \[
    \rho_\Bn(x,y) \le 2 k_\Bn(x,y) \le 2\rho_\Bn(x,y),
  \]
  \item[(2)]
  \[
    \rho_\Bn(x,y) = 2 \arcsinh \frac{|x-y|}{\sqrt{1-|x|^2}\sqrt{1-|y|^2}},
  \]
  \item[(3)]and for $r>0$
  \[
    B_{\rho_\Bn}(x,r) = B^n \left( \frac{x(1-t^2)}{1-|x|^2t^2} , \frac{(1-|x|^2)t}{1-|x|^2t^2} \right),
  \]
  where $t=\tanh (r/2)$,
  \item[(4)] and for $r \in (0,1/\sqrt{1+|x|^2})$
  \[
    \Ball{q}{x}{r} = B^n \left( \frac{x}{1-r^2(1+|x|^2)} , \frac{r(1+|x|^2)\sqrt{1-r^2}}{1-r^2 (1+|x|^2)} \right).
  \]
\end{enumerate}
\end{proposition}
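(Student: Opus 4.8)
The plan is to prove Proposition \ref{aputulos} by verifying each of the four parts directly from the definitions given earlier in the paper, since these are ``basic facts'' recapitulated for reference rather than deep results. I would organize the proof by handling each part in turn, noting that parts (2) and (3) feed naturally into each other and parts (1) and (4) can be treated somewhat independently.

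For part (2), I would start from the explicit formula for the hyperbolic length element $2|dz|/(1-|z|^2)$ on $\Bn$ and the known fact that hyperbolic geodesics in the ball are diameters and circular arcs meeting $\partial\Bn$ orthogonally. Rather than computing the geodesic integral from scratch, the cleanest route is to reduce to a one-dimensional computation along a diameter by applying a Möbius automorphism $T$ of $\Bn$ with $T(x)=0$: since $\rho_\Bn$ is Möbius invariant, $\rho_\Bn(x,y)=\rho_\Bn(0,T(y))=2\arctanh|T(y)|$, where the last equality comes from integrating $2\,dt/(1-t^2)$ from $0$ to $|T(y)|$ along the radial segment. The remaining step is to rewrite $|T(y)|$ in terms of $|x-y|$, $|x|$, $|y|$; using the standard identity for the absolute ratio or the explicit form of $T$, one obtains $|T(y)|^2 = |x-y|^2/\big(|x-y|^2+(1-|x|^2)(1-|y|^2)\big)$, and then the identity $2\arctanh s = 2\arcsinh\!\big(s/\sqrt{1-s^2}\big)$ converts this into the stated $\arcsinh$ form.

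For part (3), I would use part (2) together with Proposition \ref{aputulos}(2)'s characterization: the set $B_{\rho_\Bn}(x,r)$ is $\{y:\rho_\Bn(x,y)<r\}$, which by (2) is $\{y: |T(y)|<\tanh(r/2)\}=\{y: |T(y)|<t\}$. Since $T$ is a Möbius map, $\{|T(y)|<t\}$ is the $T$-preimage of a Euclidean ball $B^n(0,t)$, hence again a Euclidean ball; its center and radius are obtained by applying the explicit inverse Möbius formula. The computation is routine but must be carried out carefully to land on exactly the center $x(1-t^2)/(1-|x|^2t^2)$ and radius $(1-|x|^2)t/(1-|x|^2t^2)$ claimed. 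Part (4) is the analogous statement for the chordal metric: writing $q(x,y)<r$ from its definition, squaring to clear the radicals, and recognizing the resulting inequality in $y$ as defining a Euclidean ball (by completing the square / using that stereographic projection sends spherical caps to Euclidean balls) gives the stated center and radius, valid exactly when $r<1/\sqrt{1+|x|^2}$ so that the chordal ball does not contain the point at infinity.

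For part (1), I would prove the chain $\rho_\Bn \le 2k_\Bn \le 2\rho_\Bn$ by comparing the length elements. Since $d(z)=1-|z|$ for $z\in\Bn$, we have $1/d(z)=1/(1-|z|)$ while the hyperbolic density is $2/(1-|z|^2)=2/((1-|z|)(1+|z|))$; the elementary inequalities $1 \le 1+|z| \le 2$ on $\Bn$ give $1/(1-|z|^2)\le 1/d(z) \le 2/(1-|z|^2)$, i.e.\ the hyperbolic density and the quasihyperbolic density are comparable with constants $1$ and $2$. Integrating over any curve and taking infima over curves transfers these pointwise density bounds to the metrics, yielding $\tfrac12\rho_\Bn \le k_\Bn \le \rho_\Bn$, which is the claim. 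The main obstacle throughout is the bookkeeping in parts (3) and (4): deriving the precise Euclidean center and radius requires an accurate computation with the Möbius/stereographic formulas and careful algebraic simplification, and it is there that sign choices and the exact form of the normalizing denominators must be tracked to match the stated expressions.
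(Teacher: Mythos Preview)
Your proposal is correct. The paper itself does not prove Proposition~\ref{aputulos} at all: it merely cites \cite[Section~7]{avv} for parts (1) and (4) and \cite[Section~2]{v2} for parts (2) and (3), so there is no ``paper's own proof'' to compare against beyond those references.

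Your self-contained argument follows exactly the standard derivations found in those sources. The density comparison for (1), the M\"obius reduction to a radial integral for (2), the M\"obius preimage argument for (3), and the stereographic/completing-the-square computation for (4) are precisely how these facts are established in the cited texts. The only remark is that what you call ``bookkeeping'' in (3) and (4) is indeed the entire content of those parts, so if you write this up you should actually carry out the algebra rather than flag it as routine; but the strategy is sound and nothing is missing.
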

Note that in (4) we have $\Ball{q}{x}{r} \subset \Bn$, if $r \in (0,(1-|x|)/\sqrt{2(1+|x|^2)})$.

\begin{lemma}\label{BjBn}
  Let $G=\Bn$, $x = t e_1$, $t \in [0,1)$, $r>0$ and
  \[
    \partial \Ball{j}{x}{r} \cap \{ z \in \Rn \colon z=s e_1,\, s\in\R \} = \{ y_1,y_2 \}
  \]
  with $|y_2| \le |y_1|$. Then
  \[
    B^n(x,|x-y_1|) \subset \Ball{j}{x}{r} \subset B^n(x,|x-y_2|).
  \]
\end{lemma}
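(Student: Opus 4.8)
The plan is to reduce both inclusions to elementary one–variable estimates. Since $d(z)=1-|z|$ in $\Bn$ and, by rotational symmetry, we may take $x=te_1$ with $t\in[0,1)$ as in the statement, we have
\[
  \Ball{j}{x}{r}=\{z\in\Bn:\ |x-z|<(e^r-1)\min\{1-t,\,1-|z|\}\}.
\]
If $t=0$ this set equals $B^n(0,1-e^{-r})$ and the assertion is trivial, so assume $t>0$. First I would locate the two points of $\partial\Ball{j}{x}{r}$ on the axis $\R e_1$. For $s>t$ we have $\min\{1-t,1-s\}=1-s$, so solving $|x-se_1|=(e^r-1)(1-s)$ gives the single solution $s=1-(1-t)e^{-r}\,(>t)$; thus one axis point is $p=(1-(1-t)e^{-r})e_1$, with $|x-p|=(1-t)(1-e^{-r})$. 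Denote the other axis point by $m=s_-e_1$ (necessarily $s_-<t$). Its defining relation $|x-m|=(e^r-1)\min\{1-t,1-|m|\}$ gives $|x-m|=(e^r-1)(1-t)$ when $|m|\le t$, while if $|m|>t$ then $m=-|m|e_1$, $|x-m|=t+|m|$, and solving $t+|m|=(e^r-1)(1-|m|)$ gives $|m|=1-(1+t)e^{-r}$ and $|x-m|=(1+t)(1-e^{-r})$. In either case
\[
  |x-m|\ \ge\ \min\{(e^r-1)(1-t),\ (1+t)(1-e^{-r})\}.
\]

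Next I would check that $|p|$ is the largest norm occurring on $\partial\Ball{j}{x}{r}$, so that $y_1=p$ and $y_2=m$. Indeed, $|z|\le t<1-(1-t)e^{-r}=|p|$ if $z\in\partial\Ball{j}{x}{r}$ with $|z|\le t$; and if $|z|>t$ then $|x-z|=(e^r-1)(1-|z|)$, which together with $|x-z|\ge|z|-t$ forces $|z|\,e^r\le e^r-1+t$, i.e.\ $|z|\le|p|$. Hence $|m|<|p|$, so $y_1=p$ and $y_2=m$.

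For $B^n(x,|x-y_1|)=B^n(x,(1-t)(1-e^{-r}))\subset\Ball{j}{x}{r}$: if $|x-z|<(1-t)(1-e^{-r})$ then $|z|\le|x|+|x-z|<1-(1-t)e^{-r}<1$, so $z\in\Bn$ and $1-|z|>(1-t)e^{-r}$; since also $1-t>(1-t)e^{-r}$, we get $\min\{1-t,1-|z|\}>(1-t)e^{-r}$ and hence
\[
  \frac{|x-z|}{\min\{1-t,1-|z|\}}<\frac{(1-t)(1-e^{-r})}{(1-t)e^{-r}}=e^r-1,
\]
i.e.\ $j(x,z)<r$. For $\Ball{j}{x}{r}\subset B^n(x,|x-y_2|)=B^n(x,|x-m|)$: let $z\in\Ball{j}{x}{r}$. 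From $|x-z|<(e^r-1)(1-|z|)$ we get $1-|z|>|x-z|/(e^r-1)$, and combined with $|x-z|\le|x|+|z|=t+|z|$ this gives $|x-z|\,e^r/(e^r-1)<1+t$, that is $|x-z|<(1+t)(1-e^{-r})$; moreover $|x-z|<(e^r-1)\min\{1-t,1-|z|\}\le(e^r-1)(1-t)$. Therefore $|x-z|<\min\{(e^r-1)(1-t),(1+t)(1-e^{-r})\}\le|x-m|$, so $z\in B^n(x,|x-m|)$.

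The one mildly delicate point is the case split for $m$ — whether $|m|\le t$ or $|m|>t$ (equivalently, whether $r\le\log\frac{1+t}{1-t}$) — which determines the exact value of $|x-y_2|$. But only the lower bound $|x-m|\ge\min\{(e^r-1)(1-t),(1+t)(1-e^{-r})\}$ enters, and that follows in each subcase straight from the equation defining $m$, so no real obstacle arises.
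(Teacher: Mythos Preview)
Your argument is correct. Both inclusions go through exactly as you describe: the identification $y_1=p=(1-(1-t)e^{-r})e_1$, $y_2=m$ is right (and your maximality argument for $|p|$ on $\partial B_j(x,r)$ is clean), and the two bounds $|x-z|<(e^r-1)(1-t)$ and $|x-z|<(1+t)(1-e^{-r})$ hold for every $z\in B_j(x,r)$ because $\min\{1-t,1-|z|\}$ is bounded above by each of $1-t$ and $1+t-|x-z|$. In fact your displayed lower bound for $|x-m|$ is an equality: one checks that $(e^r-1)(1-t)\le(1+t)(1-e^{-r})$ precisely when $e^r\le(1+t)/(1-t)$, which is exactly the condition $|m|\le t$, so $|x-m|=\min\{(e^r-1)(1-t),(1+t)(1-e^{-r})\}$ in all cases.

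The paper's proof is organised differently. It never computes the coordinates of $y_1,y_2$; instead, for the first inclusion it uses only that $d(y_1)<\min\{d(x),d(y)\}$ for $y\in B^n(x,|x-y_1|)$ and then compares $j$-values directly. For the second inclusion it splits according to whether $y_2\in[-x,x)$ or $y_2\in(-x/|x|,-x)$ and, within the latter, according to whether $d(z)\le d(y_2)$ or $d(z)>d(y_2)$ (the last subcase being handled by $|x-z|\le t+|z|<t+|y_2|=|x-y_2|$). Your route trades those case splits in the inclusion argument for an explicit computation of the two candidate radii up front, which makes the logic of the outer inclusion uniform in $z$; the paper's route avoids the coordinate computations but needs the case analysis on $y_2$ and on $d(z)$. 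Either way the content is the same elementary comparison, and your version has the small bonus of making explicit why $y_1$ is the axis point on the side of $x$ nearer the boundary---a fact the paper uses (via $d(y_1)<d(x)$) without spelling out.
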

\begin{proof}
  By the selection of $y_1$ and $y_2$ we have $j(x,y_1) = j(x,y_2) = r$.

  We show that $B^n(x,|x-y_1|) \subset \Ball{j}{x}{r}$. Let $y \in B^n(x,|x-y_1|)$. Because $d(y_1) < \min \{ d(x),d(y) \}$ we have
  \begin{eqnarray*}
    j(x,y) & = & \log \left( 1+\frac{|x-y|}{\min \{ d(x),d(y) \}} \right) < \log \left( 1+\frac{|x-y_1|}{\min \{ d(x),d(y) \} } \right)\\
    & \le & \log \left( 1+\frac{|x-y_1|}{d(y_1)} \right) = j(x,y_1) = r.
  \end{eqnarray*}

  We show that $\Ball{j}{x}{r} \subset B^n(x,|x-y_2|)$. Let $z \in \Ball{j}{x}{r}$. We divide the proof into two cases: $y_2 \in [-x,x)$ and $y_2 \in (-x/|x|,-x)$.

  If $y_2 \in [-x,x)$, then $d(x) \le d(y_2)$ and thus $j(x,z) < j(x,y_2)$ is equivalent to
  \[
    \frac{|x-z|}{ \min \{ d(x),d(z) \} } < \frac{|x-y_2|}{d(x)}
  \]
  implying $|x-z| < |x-y_2|$.

  If $y_2 \in (-x/|x|,-x)$, then $d(y_2) < d(x)$. Inequality $j(x,z) < j(x,y_2)$ is equivalent to
  \[
    \frac{|x-z|}{ \min \{ d(x),d(z) \} } < \frac{|x-y_2|}{d(y_2)}
  \]
  implying $|x-z| < |x-y_2|$, if additionally $d(z) \le d(y_2)$. If $d(z) > d(y_2)$, then immediately $|x-z|<|x-y|$.

  In both cases we obtain that $|x-z| > |x-y_2|$ and thus the assertion follows.
\end{proof}

\begin{lemma}\label{jlemma}
  Let $G \in \{ \PS, \Hn, \Bn \}$, $r > 0$ and $x \in G$. Then
  \[
    B = B^n \left( \frac{y+z}{2},\frac{|y-z|}{2} \right) \subset B_j(x,r),
  \]
  where $y,z \in l \cap \partial B_j(x,r)$ with $d(y) \le d(z)$ and $l$ is the line that contains $x$ and a boundary point of $G$ that is closest to $x$. Moreover, $B$ is the largest Euclidean ball contained in $B_j(x,r)$.
\end{lemma}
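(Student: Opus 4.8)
The plan is to start from the following description of the $j$-ball, valid in every domain and for all $x\in G$, $r>0$:
\[
  B_j(x,r)=B^n\bigl(x,(e^r-1)d(x)\bigr)\cap\bigl\{u\in G:\ |x-u|<(e^r-1)d(u)\bigr\},
\]
which holds because $\min\{d(x),d(u)\}>|x-u|/(e^r-1)$ is equivalent to $|x-u|<(e^r-1)d(x)$ together with $|x-u|<(e^r-1)d(u)$. Two consequences should be recorded at once: (a) $B_j(x,r)\subset B^n(x,(e^r-1)d(x))$; and (b) since $|x-u|\ge|d(x)-d(u)|$ by the $1$-Lipschitz property of $d$, every $u\in B_j(x,r)$ satisfies $d(u)>e^{-r}d(x)$. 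After rotating (and, for $\PS$ and $\Hn$, scaling so that $d(x)=1$) one may take $l$ to be a coordinate axis through $x$; a direct computation on $l$ then shows that the two points of $l\cap\partial B_j(x,r)$ bounding the component that contains $x$ are the point $y$ on the side of the nearest boundary point, with $d(y)=e^{-r}d(x)$, and a point $z$ with $d(z)>d(y)$. Thus $B$ is the ball having $[y,z]$ as a diameter.

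To prove $B\subset B_j(x,r)$ I would fix $u\in B$ and check $j(x,u)<r$ in two cases. Since $x$ lies on the axis of $B$ and $z$ is the farther of the two endpoints $y,z$ from $x$, the point $z$ is the unique point of $\overline B$ farthest from $x$, and $|x-z|\le(e^r-1)d(x)$; hence $u\in B$ gives $|x-u|<|x-z|\le(e^r-1)d(x)$. If $d(u)\ge d(x)$ this is precisely $j(x,u)<r$ and we are done. If $d(u)<d(x)$, the required inequality is $|x-u|<(e^r-1)d(u)$, and here one combines $u\in B$ with consequence (b), $d(u)>e^{-r}d(x)$, in a coordinatewise computation in the normalised frame. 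This computation is short and transparent in $\Hn$ (where $d$ is affine in the relevant coordinate) and in $\PS$; I expect the $\Bn$ version, where $d(u)=1-|u|$ enters nonlinearly and $\partial B_j(x,r)$ picks up the conic-section piece of Lemma \ref{curvature}\,(3), to be the one demanding care.

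For the maximality of $B$ I would exhibit a simple region containing $B_j(x,r)$ whose widest inscribed Euclidean ball has radius exactly $\tfrac12|y-z|$. By (a) and (b), $B_j(x,r)$ is contained in $B^n(x,(e^r-1)d(x))\cap\{u:\ d(u)\ge e^{-r}d(x)\}$. In $\Hn$ this is the slab $\{e^{-r}d(x)\le u_n\le e^rd(x)\}$ of width $2\sinh r\cdot d(x)=|y-z|$, so no ball of larger radius fits; in $\PS$ the same two facts give the spherical annulus $\{e^{-r}d(x)\le|u|\le e^rd(x)\}$, whose widest inscribed ball again has radius $\sinh r\cdot d(x)=\tfrac12|y-z|$. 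Since $B\subset B_j(x,r)$ attains this radius, $B$ is the largest Euclidean ball in $B_j(x,r)$. In $\Bn$ the analogous containing region is an intersection of two balls, which (using, when $d(z)<d(x)$, that the hyperplane through $z$ orthogonal to $l$ supports $B_j(x,r)$) reduces to a spherical cap; one then computes the largest ball in that lens or cap and matches it with $\tfrac12|y-z|$ — a direct but case-split calculation in which the dichotomy $r\lessgtr\log\tfrac{1+|x|}{1-|x|}$ of Lemma \ref{curvature}\,(3) reappears.

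The step I expect to be the real obstacle is the $\Bn$ case: both the inclusion — controlling $|x-u|$ against $(e^r-1)(1-|u|)$ for points of $B$ lying off the axis $l$ — and the maximality reduction depend on the detailed shape of $\partial B_j(x,r)$ given by Lemma \ref{curvature}\,(3), rather than on the soft arguments that already close out $\Hn$ and $\PS$.
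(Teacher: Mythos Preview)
Your route is sound and genuinely different from the paper's. The paper disposes of $\PS$ and $\Hn$ by appealing to Lemma~\ref{curvature} (the $j$-sphere in those domains is a union of two explicit circular/conic arcs, from which both the inclusion and the maximality of $B$ are read off), and for $\Bn$ it simply records the explicit formulas for $(y+z)/2$ and $|y-z|/2$ in the three regimes $r\le\log\frac{1}{1-|x|}$, $\log\frac{1}{1-|x|}<r\le\log\frac{1+|x|}{1-|x|}$, $r>\log\frac{1+|x|}{1-|x|}$, again relying implicitly on Lemma~\ref{curvature}\,(3) for the actual claims. Your argument instead uses the two soft facts $B_j(x,r)\subset B^n(x,(e^r-1)d(x))$ and $B_j(x,r)\subset\{d(u)>e^{-r}d(x)\}$ to sandwich $B_j(x,r)$ between $B$ and a simple region (slab, annulus, lens, or cap) whose largest inscribed ball has exactly the radius $|y-z|/2$; this avoids the curvature computation entirely. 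For the record, your lens bound in $\Bn$ does give the sharp radius in the first two regimes: with $R=(e^r-1)(1-|x|)$ and $R'=1-e^{-r}(1-|x|)$ one gets $(R+R'-|x|)/2=(1-|x|)\sinh r=|y-z|/2$; and in the third regime your supporting-hyperplane observation at $z$ (which is correct: for $u$ with $u\cdot x/|x|\le -|z|$ one checks $|x-u|\ge(e^r-1)(1-|u|)$ directly) yields the cap $B^n(0,|y|)\cap\{u\cdot x/|x|\ge -|z|\}$, whose largest inscribed ball again has radius $(|y|+|z|)/2=1-e^{-r}=|y-z|/2$. So the programme you outline for $\Bn$ closes. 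What your approach buys is a self-contained argument independent of Lemma~\ref{curvature}; what the paper's approach buys is that, once Lemma~\ref{curvature} is in hand, the picture of $\partial B_j(x,r)$ makes the result visually immediate --- though the paper leaves the $\Bn$ verification no more explicit than you do.
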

\begin{proof}
  The cases $G \in \{ \PS,\Bn \}$ follow easily from Lemma \ref{curvature} (1) and (2).

  Let us consider $G = \Bn$. Now $y=x(1-e^{-r}(1-|x|)/|x|$ and
  \[
    z = \left\{ \begin{array}{ll} \displaystyle x \frac{1-e^r(1-|x|)}{|x|}, & \textrm{if }\displaystyle r \le \log \frac{1}{1-|x|},\\ \displaystyle x \frac{e^r(1-|x|)-1}{|x|}, & \textrm{if }\displaystyle r > \log \frac{1}{1-|x|} \textrm{ and } \log \frac{1+|x|}{1-|x|} \ge r,\\ \displaystyle x \frac{1-e^{-r}(1+|x|)}{|x|}, & \textrm{if }\displaystyle r > \log \frac{1}{1-|x|} \textrm{ and } \log \frac{1+|x|}{1-|x|} < r. \end{array}  \right.
  \]
  Thus
  \[
    \frac{y+z}{2} = \left\{ \begin{array}{ll} \displaystyle \frac{x(1-(1-|x|)\cosh r)}{|x|}, & r \le \displaystyle \log \frac{1+|x|}{1-|x|},\\ \displaystyle -\frac{x e^{-r}}{|x|}, & \displaystyle r > \log \frac{1+|x|}{1-|x|} \end{array} \right.
  \]
  and
  \[
    \frac{|y-z|}{2} = \left\{ \begin{array}{ll} \displaystyle (1-|x|)\sinh r, & r \le \displaystyle \log \frac{1+|x|}{1-|x|},\\ \displaystyle 1-e^{-r}, & \displaystyle r > \log \frac{1+|x|}{1-|x|}. \end{array} \right.
  \]
\end{proof}

\begin{remark}\label{jremark}
  Lemma \ref{jlemma} is not true in general. For $G = \R^2 \setminus \{ -e_1,e_1 \}$ does not hold for $x = e_2$ and $r=\log 2$. In this case $\partial \Ball{j}{x}{r}$ consists of two perpendicular line segments and a circular arc. The line segments are $[0,a(e_1+e_2)]$ and $[0,a(-e_1+e_2)]$, where $a = (1+\sqrt{3})/2$. See Figure \ref{jdiskremark}.

  However, the following question is open: Is Lemma \ref{jlemma} true in convex domains?
\end{remark}

\begin{figure}[!ht]
  \begin{center}
    \includegraphics[width=40mm]{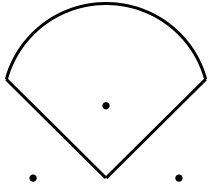}
    \caption{Boundary $\partial \Ball{j}{e_2}{\log 2}$ in $G = \R^2 \setminus \{ -e_1,e_1 \}$.\label{jdiskremark}}
  \end{center}
\end{figure}

\begin{lemma}\label{technicallemma}
  For $a,b \in [0,1]$ we have
  \begin{itemize}
    \item[(1)] $\min \{ 1-a,1-b \}(1+\max \{ a,b \}) \le \sqrt{1-a^2} \sqrt{1-b^2}$.
  \end{itemize}
  For $x,y \in \Bn$ and $r \ge \arcsinh (2|x|/(1-|x|^2))$ we have
  \begin{itemize}
    \item[(2)] $\min \{ d(x),d(y) \} (1+|x|) \le \sqrt{1-|x|^2}\sqrt{1-|y|^2}$,
    \item[(3)] $\displaystyle \frac{2|x|}{1-|x|}-\frac{1+|x|}{1/\tanh(r/2)-|x|} \le (1+|x|)\frac{e^r-1}{2}$.
  \end{itemize}
\end{lemma}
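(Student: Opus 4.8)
The plan is to dispatch the three inequalities separately, each by reduction to a one-variable estimate.

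For part (1), I would first observe that the claim is symmetric in $a,b$, so we may assume $a \le b$, in which case $\min\{1-a,1-b\} = 1-b$ and $\max\{a,b\} = b$, and the inequality becomes $(1-b)(1+b) \le \sqrt{1-a^2}\sqrt{1-b^2}$, i.e. $1-b^2 \le \sqrt{1-a^2}\sqrt{1-b^2}$. Since $1-b^2 = \sqrt{1-b^2}\sqrt{1-b^2}$, after dividing by $\sqrt{1-b^2}$ (which is positive unless $b=1$, a trivial case) this is just $\sqrt{1-b^2} \le \sqrt{1-a^2}$, which holds because $a \le b$. This part is essentially immediate.

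For part (2), the natural move is to apply (1) with $a = |x|$, $b = |y|$ (or vice versa): the left-hand side of (1) then reads $\min\{1-|x|,1-|y|\}\,(1+\max\{|x|,|y|\})$. The issue is that in $\Bn$ we have $d(x) = 1-|x|$ and $d(y) = 1-|y|$, so $\min\{d(x),d(y)\} = \min\{1-|x|,1-|y|\}$, but the factor we want is $(1+|x|)$, not $(1+\max\{|x|,|y|\})$. When $|x| \ge |y|$ these agree and (2) follows directly from (1). The remaining case is $|y| > |x|$, and this is exactly where the hypothesis $r \ge \arcsinh(2|x|/(1-|x|^2))$ enters — but note the lemma does not mention $r$ in the conclusion of (2), so in fact we must show (2) holds for all $x,y \in \Bn$ unconditionally, or else the $r$-hypothesis is being used to restrict $y$ implicitly through the intended application. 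I would handle it cleanly as follows: (2) with the roles possibly swapped is symmetric, and $\min\{d(x),d(y)\}(1+|x|) \le \min\{d(x),d(y)\}(1+\max\{|x|,|y|\}) \le \sqrt{1-|x|^2}\sqrt{1-|y|^2}$ whenever $|x| \ge |y|$; when $|y| > |x|$ one has $\min\{d(x),d(y)\} = 1-|y|$, so the left side is $(1-|y|)(1+|x|)$ and we need $(1-|y|)(1+|x|) \le \sqrt{(1-|x|)(1+|x|)(1-|y|)(1+|y|)}$, i.e. $(1-|y|)(1+|x|) \le \sqrt{1-|y|^2}$, i.e. $\sqrt{1-|y|}\sqrt{1+|x|}^{\,2}/\sqrt{1+|y|} \le \sqrt{1+|y|}$... more directly, squaring gives $(1-|y|)(1+|x|)^2 \le (1-|x|)(1+|y|)$, which simplifies to a polynomial inequality in $|x|,|y|$; I expect this to reduce to $|x| \le |y|$ together with a bounded correction term, and this is where the lower bound on $r$ (hence an upper bound on the admissible $|x|$, or a constraint tying $|x|$ to $|y|$) will be needed. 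This is the one step I expect to require genuine care.

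For part (3), this is a pure one-variable inequality once we set $s = 1/\tanh(r/2)$ (so $s > 1$, and $s$ decreases as $r$ increases, with the hypothesis $r \ge \arcsinh(2|x|/(1-|x|^2))$ giving an explicit upper bound on $s$ in terms of $|x|$). Writing $u = |x| \in [0,1)$, the claim becomes
\[
  \frac{2u}{1-u} - \frac{1+u}{s-u} \le (1+u)\frac{e^r-1}{2},
\]
and using $\tanh(r/2) = 1/s$ one can express $e^r - 1$ in terms of $s$ as well, namely $e^r = (s+1)/(s-1)$ so $e^r - 1 = 2/(s-1)$. Substituting, the inequality becomes rational in $u$ and $s$; clearing the positive denominators $(1-u)(s-u)(s-1)$ turns it into a polynomial inequality that should factor favorably, with the hypothesis $r \ge \arcsinh(2u/(1-u^2))$ — equivalently $2u/(1-u^2) \le \sinh r = (e^r - e^{-r})/2$, i.e. a relation between $u$ and $s$ — providing exactly the slack needed to close it. The main obstacle across the whole lemma is thus part (2)'s off-diagonal case and the bookkeeping in part (3) of translating the $\arcsinh$ hypothesis into a usable algebraic constraint; neither is deep, but both demand attention to which variable is bounded in terms of which.
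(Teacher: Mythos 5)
Your part (1) is correct and is the paper's argument in different clothing: $\min\{1-a,1-b\}(1+\max\{a,b\}) = 1-\max\{a,b\}^2 = \min\{1-a^2,1-b^2\} \le \sqrt{1-a^2}\sqrt{1-b^2}$. In part (2), however, the case you flag as needing ``genuine care'' is a phantom created by an algebra slip. Since $1+|x| \le 1+\max\{|x|,|y|\}$, the left-hand side of (2) is at most the left-hand side of (1) with $a=|x|$, $b=|y|$, so (2) follows from (1) for \emph{all} $x,y\in\Bn$, with no use of the hypothesis on $r$; this is precisely the paper's one-line disposal (``Part (2) follows from (1)''). Concretely, in your case $|y|>|x|$ the squared inequality should read $(1-|y|)(1+|x|) \le (1-|x|)(1+|y|)$, which is literally $|x|\le|y|$; your version $(1-|y|)(1+|x|)^2 \le (1-|x|)(1+|y|)$ dropped a factor $(1+|x|)$ on the right and is in fact false (try $|x|=1/2$, $|y|=3/5$), which is why you were led to believe the lower bound on $r$ had to intervene. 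It does not; that hypothesis is only relevant to (3).

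For part (3) your plan is a genuinely different route from the paper's, and it does close, but you have not carried out the decisive computation, so as submitted this part is a sketch rather than a proof. The paper argues by monotonicity: it sets $f(r) = (1+|x|)\frac{e^r-1}{2} - \frac{2|x|}{1-|x|} + \frac{1+|x|}{1/\tanh(r/2)-|x|}$, checks $f'(r)>0$, and evaluates $f$ at the endpoint $r=\arcsinh(2|x|/(1-|x|^2))$, where it equals $|x|^2/(1-|x|)\ge 0$. To finish your route: with $u=|x|$ and $s=1/\tanh(r/2)$ one has $\sinh r = 2s/(s^2-1)$, so the hypothesis is equivalent to $us\le 1$; clearing the positive denominators $(1-u)(s-u)(s-1)$ turns the claim into $2u(s-u)(s-1) \le (1-u^2)(2s-u-1)$, and the difference of the two sides equals $\bigl(2s(1+u)-2us^2\bigr) - (1+u+u^2-u^3)$. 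The quadratic $2s(1+u)-2us^2$ is concave in $s$ and takes the value $2$ at both $s=1$ and $s=1/u$ (assume $u>0$; the case $u=0$ is trivial), hence is at least $2$ on $(1,1/u]$, while $1+u+u^2-u^3\le 2$ on $[0,1]$ because $u+u^2-u^3$ is increasing there. So the inequality holds exactly on the range $us\le 1$ supplied by the hypothesis (the paper's endpoint corresponds to $us=1$). Your substitution approach makes the role of the hypothesis and the near-equality cases explicit; the paper's derivative-plus-endpoint argument is shorter and avoids the polynomial bookkeeping.
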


\begin{proof}
  We consider first (1). We easily obtain
  \[
    \min \{ 1-a,1-b \}(1+\max \{ a,b \}) = 1-\max \{ a,b \}^2 \le \sqrt{1-a^2} \sqrt{1-b^2}.
  \]

  Part (2) follows from (1).

  Let us then consider (3), which is equivalent to showing that the function
  \[
    f(r) = (1+|x|)\frac{e^r-1}{2}-\frac{2|x|}{1-|x|}+\frac{1+|x|}{1/\tanh(r/2)-|x|}
  \]
  is nonnegative. Since
  \[
    f'(r) = \frac{1+|x|}{2} \left( e^r+\frac{1}{( \cosh (r/2)-|x| \sinh(r/2) )^2} \right) > 0
  \]
  and $f(\arcsinh (2|x|/(1-|x|^2))) = |x|^2/(1-|x|) \ge 0$ the assertion follows.
\end{proof}

\section{Inclusion relations of metric balls}

In this section we consider metric balls in unit ball $G = \Bn$. Since we do not know the exact form of the quasihyperbolic ball we need to use the hyperbolic balls.

\begin{theorem}\label{jrhoj}
  Let $G = \Bn$, $x \in G$ and $r > 0$. Then
  \[
    B_j(x,m) \subset B_\rho(x,r) \subset B_j(x,M),
  \]
  where $m = \max \{ m_1, m_2 \}$
  \[
    m_1 =  \log \left( 1+(1+|x|)\sinh \frac{r}{2} \right) , \quad m_2 = \log \left( 1+(1-|x|)\frac{e^r-1}{2} \right)
  \]
  and
  \[
    M = \log \left( 1+(1+|x|)\frac{e^r-1}{2} \right).
  \]
  Moreover, the inclusions are sharp and $M/m \to 1$ as $r \to 0$.
\end{theorem}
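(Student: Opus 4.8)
The plan is to reduce both inclusions to explicit one‑variable estimates built on the Euclidean description of the hyperbolic ball. By rotational invariance take $x=ae_1$ with $a=|x|\in[0,1)$ and put $s=\tanh(r/2)$; we shall use the identities $\frac{1+s}{1-s}=e^r$, $\frac{s}{1-s}=\frac{e^r-1}{2}$, $e^{r/2}\sinh(r/2)=\frac{e^r-1}{2}$, and Proposition \ref{aputulos}(2) in the form: $\rho(x,y)\ge r$ iff $|x-y|\ge\sinh(r/2)\sqrt{1-a^2}\sqrt{1-|y|^2}$ (and with $<$ or $=$ in place of $\ge$). By Proposition \ref{aputulos}(3), $B_\rho(x,r)=B^n(c,R)$ with $c=\frac{a(1-s^2)}{1-a^2s^2}e_1\in[0,x]$ and $R=\frac{(1-a^2)s}{1-a^2s^2}$, and a short computation gives
\[
  |c|+R=\frac{a+s}{1+as},\qquad R-|c|=\frac{s-a}{1-as},\qquad R+|c-x|=\frac{(1-a^2)s}{1-as},
\]
which are, respectively, the largest Euclidean modulus attained on $\overline{B_\rho(x,r)}$, a uniform lower bound for $|y|$ on $\Rn\setminus B_\rho(x,r)$, and the largest Euclidean distance from $x$ to $\overline{B_\rho(x,r)}$; all are attained on the line $\R e_1$.

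For $B_\rho(x,r)\subset B_j(x,M)$ I would show $j(x,y)<M$ for every $y\in B_\rho(x,r)$, splitting on $\min\{d(x),d(y)\}$. If $|y|\le a$ then $\min\{d(x),d(y)\}=1-a$ and $|x-y|<R+|c-x|$, so $\frac{|x-y|}{1-a}<\frac{(1+a)s}{1-as}\le(1+a)\frac{e^r-1}{2}$, the last step reducing to $e^r(1-a)+1+a\ge 2$. If $|y|>a$ then $\min\{d(x),d(y)\}=1-|y|$, and from $|x-y|<\sinh(r/2)\sqrt{1-a^2}\sqrt{1-|y|^2}$ together with $|y|<|c|+R$ and monotonicity of $t\mapsto\sqrt{(1+t)/(1-t)}$ one gets $\frac{|x-y|}{1-|y|}<\sinh(r/2)\sqrt{1-a^2}\sqrt{\frac{(1+a)e^r}{1-a}}=(1+a)\frac{e^r-1}{2}$. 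Either way $j(x,y)<\log(1+(1+a)\frac{e^r-1}{2})=M$. For sharpness, at $y_0=\frac{a+s}{1+as}e_1\in\partial B_\rho(x,r)$ a direct computation gives $\frac{|x-y_0|}{\min\{d(x),d(y_0)\}}=(1+a)\frac{e^r-1}{2}$, i.e.\ $j(x,y_0)=M$, so by continuity of $j(x,\cdot)$ no $M'<M$ works.

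For $B_j(x,m)\subset B_\rho(x,r)$ I would argue by contraposition: it suffices that $j(x,y)\ge m$ whenever $\rho(x,y)\ge r$. First $j(x,y)\ge m_1$ for all such $y$: from $|x-y|\ge\sinh(r/2)\sqrt{1-a^2}\sqrt{1-|y|^2}$ and, by Lemma \ref{technicallemma}(1), $\sqrt{1-a^2}\sqrt{1-|y|^2}\ge(1+a)\min\{d(x),d(y)\}$, one gets $\frac{|x-y|}{\min\{d(x),d(y)\}}\ge(1+a)\sinh(r/2)$. Next, if $m_2\ge m_1$ — equivalently $r\ge 2\log\frac{1+a}{1-a}$, equivalently $s(1+a^2)\ge 2a$ — then $R-|c|\ge a$, so every $y$ with $\rho(x,y)\ge r$ has $|y|\ge R-|c|=\frac{s-a}{1-as}\ge a$, hence $\min\{d(x),d(y)\}=1-|y|$ and, again by monotonicity, $\frac{|x-y|}{1-|y|}\ge\sinh(r/2)\sqrt{1-a^2}\sqrt{\frac{(1-a)e^r}{1+a}}=(1-a)\frac{e^r-1}{2}$, i.e.\ $j(x,y)\ge m_2$. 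Thus $j(x,y)\ge\max\{m_1,m_2\}=m$ in all cases. For sharpness: if $m=m_2$, the point $\frac{a-s}{1-as}e_1\in\partial B_\rho(x,r)$ has $j$-distance exactly $m_2$; if $m=m_1$, then $a$ lies between the least and greatest modulus of points of $\partial B_\rho(x,r)$, and any such point $y$ with $|y|=a$ has $j(x,y)=\log(1+(1+a)\sinh(r/2))=m_1$. In either case $S_j(x,m)$ meets $\partial B_\rho(x,r)$, so $m$ is best possible.

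Finally $M/m\to 1$ as $r\to 0$: for small $r$ one has $r<2\log\frac{1+a}{1-a}$ (or $m_1=m_2$ when $a=0$), so $m=m_1$, and $M=\log(1+(1+a)\frac{e^r-1}{2})$ and $m_1=\log(1+(1+a)\sinh(r/2))$ are both $(1+a)\frac r2+O(r^2)$ as $r\to 0$. The part I expect to be most delicate is the regime analysis in the second inclusion: one must check that the condition $m_2\ge m_1$ coincides with $\Rn\setminus B_\rho(x,r)\subset\{y:|y|\ge d(x)\}$, which is exactly what turns the lower bound for $j$ on that set into a monotone one‑variable estimate with minimum at the axis point $\frac{a-s}{1-as}e_1$; one must also verify that in each regime the extremal point of the relevant estimate actually lies on $\partial B_\rho(x,r)$, which is what produces sharpness.
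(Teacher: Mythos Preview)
Your proof is correct and follows the same overall strategy as the paper: reduce by rotation to $x=ae_1$, split each inclusion according to whether $|y|\le a$ or $|y|>a$, use Proposition~\ref{aputulos}(2)--(3) and Lemma~\ref{technicallemma}, and read off the extremal values of $j(x,\cdot)$ at the two axis points of $\partial B_\rho(x,r)$.

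Where you differ is in execution. In the case $|y|\le a$ of $B_\rho(x,r)\subset B_j(x,M)$ the paper appeals to the curvature description of $S_j$ (Lemma~\ref{curvature}(3)) to reduce to the axis point $z_1$ and then splits further into the subcases $|x-z_1|=|x|\mp|z_1|$, invoking Lemma~\ref{technicallemma}(3) for the second; you bypass both by bounding $|x-y|$ directly by $R+|c-x|=\frac{(1-a^2)s}{1-as}$, which gives $\frac{|x-y|}{1-a}\le\frac{(1+a)s}{1-as}\le(1+a)\frac{s}{1-s}$ in one line. For the first inclusion you argue by contraposition and make explicit that $m_2\ge m_1$ is equivalent to $\overline{B^n(0,a)}\subset B_\rho(x,r)$ (via $R-|c|\ge a\Leftrightarrow s(1+a^2)\ge 2a$), which is exactly the regime identification the paper uses implicitly when it observes $S^{n-1}(0,|x|)\subset B_\rho(x,r)$. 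Your route is a bit more self-contained (no need for Lemma~\ref{curvature}(3) or Lemma~\ref{technicallemma}(3)); the paper's route has the small benefit of isolating those auxiliary estimates for reuse.
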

\begin{proof}
  We prove inclusion $B_j(x,m) \subset B_\rho(x,r)$. Let us first assume $y \in B_j(x,m_1)$, which is equivalent to
  \begin{equation}\label{absxyestimate1}
    |x-y| < \min \{ d(x),d(y) \} (1+|x|)\sinh (r/2).
  \end{equation}
  Since $\sinh$ and $\arcsinh$ are increasing we obtain by Proposition \ref{aputulos} (2) and \eqref{absxyestimate1}
  \begin{eqnarray*}
    \rho(x,y) & \le & 2 \arcsinh \displaystyle \frac{\min \{ d(x),d(y) \} (1+|x|)\sinh (r/2)}{\sqrt{1-|x|^2}\sqrt{1-|y|^2}}\\
    & \le & 2 \arcsinh ( \sinh (r/2) ) \le r,
  \end{eqnarray*}
  where the second inequality follows from Lemma \ref{technicallemma} (2). Now $y \in B_\rho(x,r)$ and thus $B_j(x,m_1) \subset B_\rho(x,r)$.

  Let us then assume $y \in \partial B_j(x,m_2)$ and $m_1 < m_2$. Since $m_1 < m_2$ is equivalent to $r > 4 \arctanh |x|$, we obtain by Lemma \ref{aputulos} (2) that $S^{n-1}(0,|x|) \subset \Ball{\rho}{x}{r}$. Thus $|x| < |y|$, and $j(x,y) = m_2$ is equivalent to
  \[
    \frac{|x-y|}{1-|y|} = (1-|x|)\frac{e^r-1}{2}.
  \]
  Now
  \begin{eqnarray*}
    \rho(x,y) & = & 2 \arcsinh \left(  \displaystyle \frac{|x-y|}{(1-|y|)\sqrt{1-|x|^2}} \sqrt{\frac{1-|y|}{1+|y|}} \right)\\
    & = & 2 \arcsinh \left(  \displaystyle \frac{(1-|x|)(e^r-1)}{2\sqrt{1-|x|^2}} \sqrt{\frac{1-|y|}{1+|y|}} \right)
  \end{eqnarray*}
  and $\rho(x,y) \le \rho(x,z_1)$ for $z_1 \in \partial B_j(x,m_2)$ with $|z_1| \le |y|$. In other words $z_1 = l \cap \partial \Ball{j}{x}{r}$, where $l = \{ u \in \Bn \colon u=sx,\,s<|x| \}$, and
  \[
    |z_1| = 1-\frac{2(1+|x|)}{1+|x|+e^r(1-|x|)}.
  \]
  Thus
  \[
    \rho(x,y) \le \rho(x,z_1) = 2 \arcsinh \frac{|x|+|z_1|}{\sqrt{1-|x|^2}\sqrt{1-|z_1|^2}}
  \]
  and $y \in \Ball{\rho}{x}{r}$ implying $B_j(x,m_2) \subset B_\rho(x,r)$.

  We show that $m$ is sharp. If $S^{n-1}(0,|x|) \cap ( \partial \Ball{j}{x}{r} ) = \emptyset$, then $j(x,z_1) = m_2 = m$. Otherwise we can choose $z \in S^{n-1}(0,|x|) \cap ( \partial \Ball{j}{x}{r} )$ and we obtain $j(x,z) = m_1 = m$.

  We prove next the inclusion $B_\rho(x,r) \subset B_j(x,M)$. We assume first that $y \in B_\rho(x,r)$ and $d(x) \le d(y)$, which is equivalent to $|y| \le |x|$. By Lemma \ref{curvature} (3) and Proposition \ref{aputulos} (3) $y \in B_j(x,M)$, if $j(x,z_1) \le M$ for $z_1 = l \cap \partial B_\rho(x,r)$, where $l = \{ u \in \Bn \colon u=sx,\,s<|x| \}$. If $|x-z_1| = |x|-|z_1|$, then by Proposition \ref{aputulos} (2) we have $r \le \arcsinh (2|x|/(1-|x|^2))$,
  \[
    |z_1| = \frac{2|x|+(|x|^2-1)\sinh r}{1+|x|^2-(|x|^2-1)\cosh r}
  \]
  and
  \[
    \frac{|x-z_1|}{\min \{ d(x),d(z_1) \}} = \frac{|x|-|z_1|}{1-|x|} = \frac{1+|x|}{1/\tanh(r/2)-|x|} \le (1+|x|)\frac{e^r-1}{2}
  \]
  implying $j(x,z_1) \le M$.
  If $|x-z_1| = |x|+|z_1|$, then by Proposition \ref{aputulos} (2) we have $r \ge \arcsinh (2|x|/(1-|x|^2))$,
  \[
    |z_1| = \frac{2|x|+(|x|^2-1)\sinh r}{-1-|x|^2+(|x|^2-1)\cosh r}
  \]
  and by Lemma \ref{technicallemma} (3)
  \[
    \frac{|x-z_1|}{\min \{ d(x),d(z_1) \}} = \frac{|x|+|z_1|}{1-|x|} = \frac{2|x|}{1-|x|}-\frac{1+|x|}{1/\tanh(r/2)-|x|} \le (1+|x|)\frac{e^r-1}{2}
  \]
  implying $j(x,z_1) \le M$.

  We assume then that $y \in \partial B_\rho(x,r)$ and $d(y) \le d(x)$. Now $y \in \partial B_\rho(x,r)$ is equivalent to
  \[
    \frac{|x-y|}{\sqrt{1-|y|}} = \sqrt{1+|y|}\sqrt{1-|x|^2}\sinh \frac{r}{2}
  \]
  and thus by Lemma \ref{aputulos} (3)
  \[
    j(x,y) = \log \left( 1+\frac{\sqrt{1+|y|}}{\sqrt{1-|y|}}\sqrt{1-|x|^2}\sinh \frac{r}{2} \right) \le j(x,z_2)
  \]
  for $z_2 = l \cap \partial B_\rho(x,r)$, where $l = \{ u \in \Bn \colon u=sx,\,s>|x| \}$. By Proposition \ref{aputulos} (2) we obtain
  \[
    |z_2| = 1- \frac{2(1-|x|)}{1-|x|+e^r(1+|x|)}
  \]
  and
  \[
    j(x,z_2) = \log \left( 1+\frac{|z_2|-|x|}{1-|z_2|} \right) = M
  \]
  implying the claim. This also shows that $M$ is sharp.

  By the l'H\^opital rule we obtain
  \[
    \lim_{r \to 0} \frac{M}{m} = \lim_{r \to 0} \frac{(1+(e^r-1)(1+|x|)/2)\cosh(r/2)}{e^r(1+(1+|x|)\sinh (r/2))} = 1
  \]
  and the assertion follows.
\end{proof}

\begin{figure}[!ht]
  \begin{center}
    \includegraphics[height=50mm]{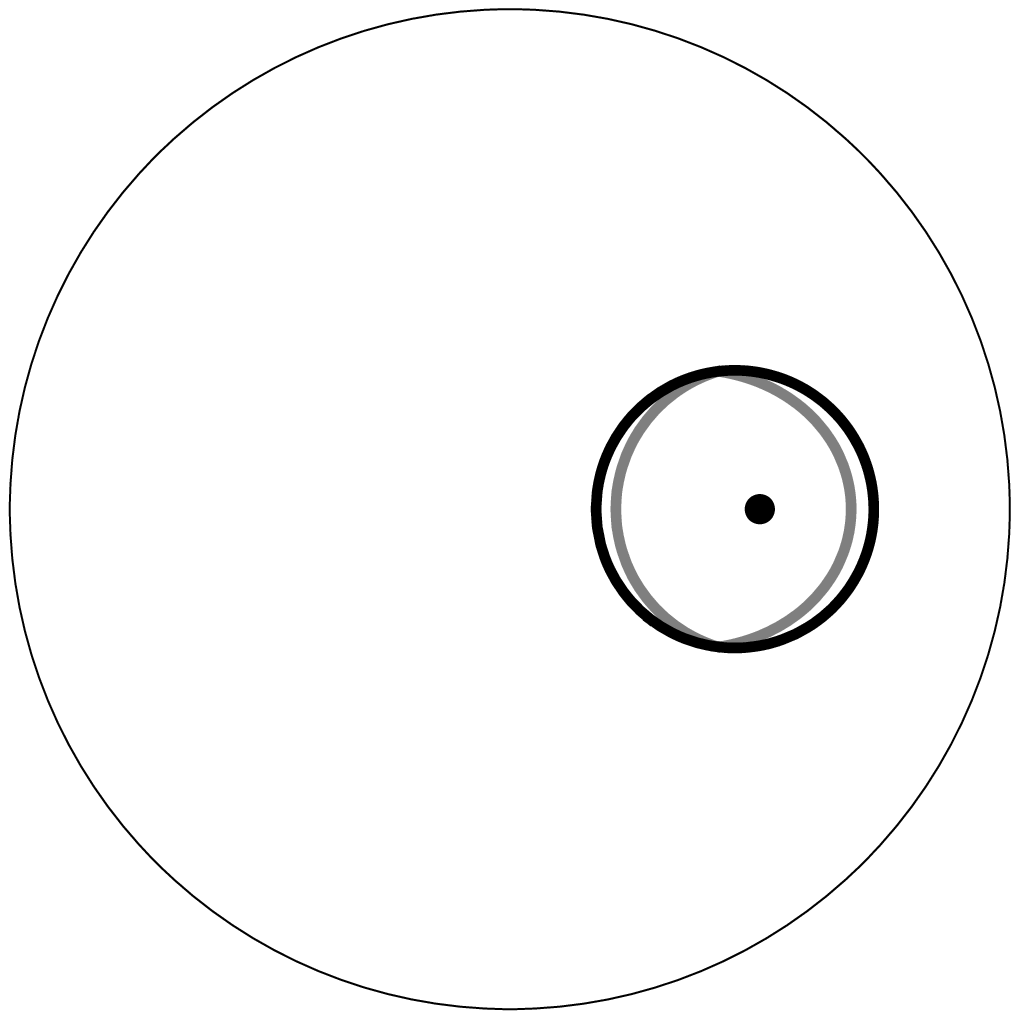}\hspace{1cm}
    \includegraphics[height=50mm]{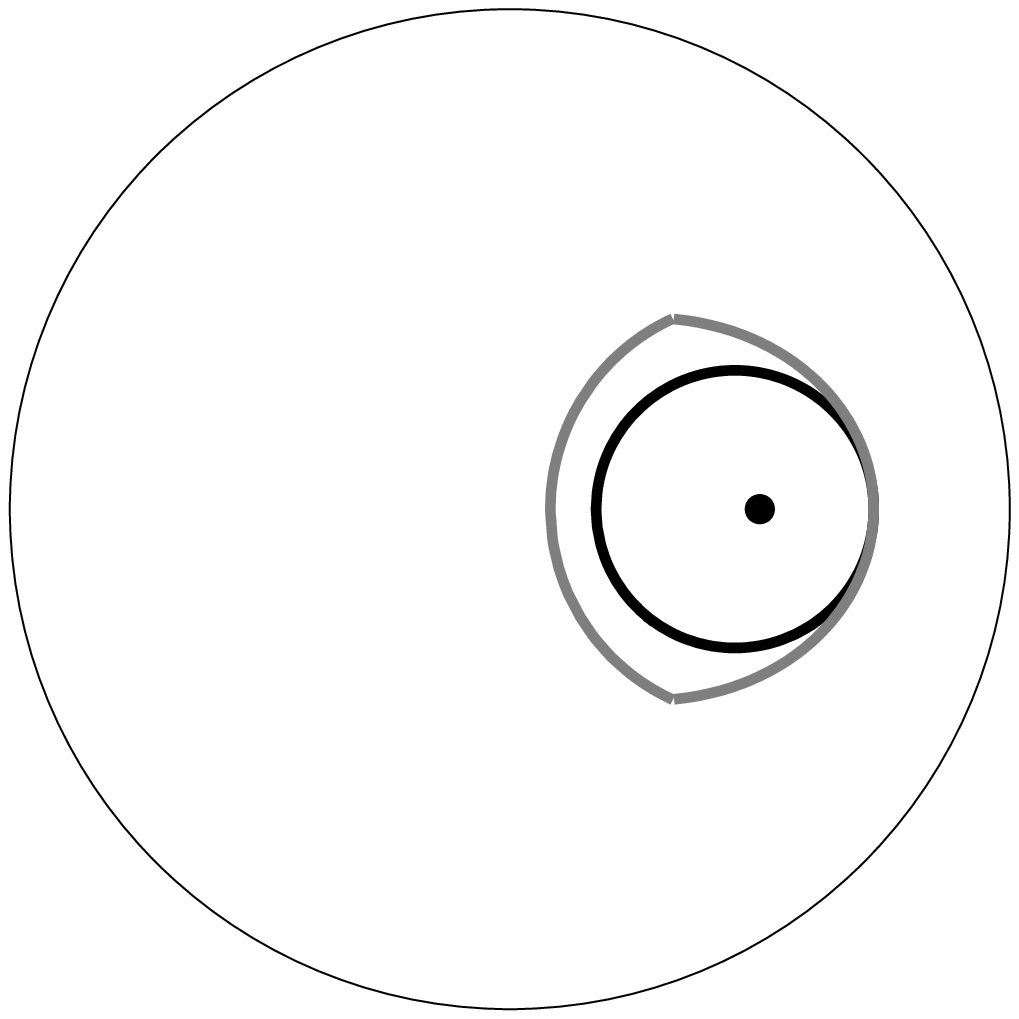}
    \caption{An example of inclusions of hyperbolic disks (black) and $j$-metric disks (gray) in the unit disk. The black dot is the center of the disks and the black thin circle is the unit circle.\label{fig1}}
  \end{center}
\end{figure}

\begin{corollary}\label{rhojrho}
  Let $G = \Bn$, $x \in G$ and $r > 0$. Then
  \[
    B_\rho(x,m) \subset B_j(x,r) \subset B_\rho(x,M),
  \]
  where
  \[
    m = \log \left( 1+\frac{2(e^r-1)}{1+|x|} \right)
  \]
  and
  \[
    M = \min \left\{ 2 \arcsinh \frac{e^r-1}{1+|x|} , \log\frac{2e^r-1-|x|}{1-|x|} \right\}.
  \]
  Moreover, the inclusions are sharp and $M/m \to 1$ as $r \to 0$.
\end{corollary}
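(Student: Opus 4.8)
The plan is to obtain Corollary~\ref{rhojrho} directly from Theorem~\ref{jrhoj} by inverting the radius functions there. Theorem~\ref{jrhoj} tells us, for every $x \in \Bn$ and every $s > 0$, that $B_j(x,m(s)) \subset B_\rho(x,s) \subset B_j(x,M(s))$ with sharp radii
\[
  m(s) = \max\left\{ \log\left(1+(1+|x|)\sinh\tfrac{s}{2}\right), \log\left(1+(1-|x|)\tfrac{e^s-1}{2}\right) \right\}, \qquad M(s) = \log\left(1+(1+|x|)\tfrac{e^s-1}{2}\right).
\]
The inclusion $B_\rho(x,a) \subset B_j(x,r)$ holds (and is sharp) precisely when $M(a) \le r$, i.e.\ when $a \le M^{-1}(r)$; taking $a = M^{-1}(r)$ gives the sharp left-hand inclusion of the corollary. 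Similarly $B_j(x,r) \subset B_\rho(x,b)$ holds and is sharp precisely when $m(b) \ge r$, i.e.\ when $b \ge m^{-1}(r)$; taking $b = m^{-1}(r)$ gives the sharp right-hand inclusion. So the whole proof reduces to solving the two scalar equations $M(a) = r$ and $m(b) = r$ for $a$ and $b$.

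First I would invert $M$. From $\log(1+(1+|x|)(e^{a}-1)/2) = r$ we get $e^{a} - 1 = 2(e^r-1)/(1+|x|)$, hence $a = \log\left(1 + 2(e^r-1)/(1+|x|)\right)$, which is exactly the claimed $m$ in the corollary. Next I would invert $m$. Since $m(b)$ is the maximum of two increasing functions of $b$, its inverse at level $r$ is the \emph{minimum} of the two individual inverses: solving $\log(1+(1+|x|)\sinh(b/2)) = r$ gives $\sinh(b/2) = (e^r-1)/(1+|x|)$, so $b = 2\arcsinh\bigl((e^r-1)/(1+|x|)\bigr)$; solving $\log(1+(1-|x|)(e^b-1)/2) = r$ gives $e^b - 1 = 2(e^r-1)/(1-|x|)$, so $b = \log\bigl(1 + 2(e^r-1)/(1-|x|)\bigr) = \log\bigl((2e^r-1-|x|)/(1-|x|)\bigr)$. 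Taking the minimum of these two values yields precisely the stated $M$. Care is needed only in checking that ``$B_\rho(x,a)\subset B_j(x,r)$ is sharp iff $M(a)\le r$'': this is immediate because Theorem~\ref{jrhoj} asserts $M(a)$ is the sharp outer $j$-radius of $B_\rho(x,a)$, so $B_\rho(x,a)$ is contained in $B_j(x,r)$ exactly when $r$ is at least that sharp radius, and the extremal point realizing $M(a)=r$ is the same as the one witnessing sharpness in Theorem~\ref{jrhoj}; the same reasoning handles $m$.

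Finally, for the limit $M/m \to 1$ as $r \to 0$, I would argue that near $r = 0$ the first branch $2\arcsinh((e^r-1)/(1+|x|))$ is the smaller one (since $\arcsinh t < \log(1+t)\cdot\frac{1}{1}$-type comparisons, or more simply since both branches have the same first-order Taylor term $2(e^r-1)/(1+|x|) \approx 2r/(1+|x|)$ while the $\arcsinh$ branch has a smaller next-order term), so $M(r) = 2\arcsinh((e^r-1)/(1+|x|))$ for small $r$; then both $M$ and $m$ are asymptotically $2r/(1+|x|)$ as $r\to 0$, whence $M/m \to 1$. Alternatively one can apply l'H\^opital to $M/m$ exactly as in the proof of Theorem~\ref{jrhoj}. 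I expect the only mildly delicate point to be bookkeeping which of the two branches in $m(b)$ (equivalently in $M(r)$ of the corollary) is active in which range of $r$ and $|x|$ — but since the corollary simply records the minimum, no case split is actually needed in the statement, and the sharpness transfers branch-by-branch from Theorem~\ref{jrhoj}.
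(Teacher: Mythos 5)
Your proposal is correct and is essentially the paper's proof: the paper derives Corollary \ref{rhojrho} in one line from Theorem \ref{jrhoj}, and you have simply made explicit the inversion of $M(s)$ and of the two branches of $m(s)$ (turning the max into a min), which yields exactly the stated radii, with sharpness and the limit $M/m\to 1$ transferring from the theorem. One small slip: your parenthetical claim that both branches of the corollary's $M$ have the same first-order term $2(e^r-1)/(1+|x|)$ is false (the second branch behaves like $2r/(1-|x|)$), but the conclusion stands — for $x\neq 0$ the $\arcsinh$ branch is the minimum near $r=0$ precisely because its first-order term is smaller, and for $x=0$ the other branch coincides with $m$, so in all cases $M$ and $m$ are asymptotic and $M/m\to 1$ (or use l'H\^opital as you note).
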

\begin{proof}
  Assertion follows from Theorem \ref{jrhoj}.
\end{proof}

\begin{corollary}\label{jkj}
  Let $G = \Bn$, $x \in G$ and $r > 0$. Then
  \[
    B_j(x,m) \subset B_k(x,r) \subset B_j(x,M),
  \]
  where
\[
    m = \max \left\{ \log \left( 1+(1+|x|)\sinh \frac{r}{4} \right) , \log \left( 1+(1-|x|)\frac{e^{r/2}-1}{2} \right) \right\}
  \]
  and
  \[
    M = \log \left( 1+(1+|x|)\frac{e^r-1}{2} \right).
  \]
\end{corollary}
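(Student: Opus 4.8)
The plan is to derive Corollary \ref{jkj} from Theorem \ref{jrhoj} by squeezing the quasihyperbolic ball between hyperbolic balls with the help of Proposition \ref{aputulos}(1). The two-sided comparison $\rho_\Bn \le 2k_\Bn \le 2\rho_\Bn$ rephrases, for balls with a common centre, as $B_\rho(x,r/2) \subset B_k(x,r) \subset B_\rho(x,2r)$: indeed $\rho_\Bn(x,y)<r/2$ forces $k_\Bn(x,y)\le \rho_\Bn(x,y)<r$, and $k_\Bn(x,y)<r$ forces $\rho_\Bn(x,y)\le 2k_\Bn(x,y)<2r$.

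For the inclusion $B_j(x,m) \subset B_k(x,r)$ I would apply the first half of Theorem \ref{jrhoj} with radius $r/2$ in place of $r$. It gives $B_j(x,m) \subset B_\rho(x,r/2)$ with $m = \max\{\log(1+(1+|x|)\sinh(r/4)),\log(1+(1-|x|)(e^{r/2}-1)/2)\}$, which is precisely the $m$ in the statement, and then $B_\rho(x,r/2) \subset B_k(x,r)$ closes the chain.

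For the inclusion $B_k(x,r) \subset B_j(x,M)$ I would feed $B_k(x,r) \subset B_\rho(x,2r)$ into the second half of Theorem \ref{jrhoj}, which yields $B_k(x,r) \subset B_j(x,\log(1+(1+|x|)(e^{2r}-1)/2))$. To obtain the sharper $M=\log(1+(1+|x|)(e^r-1)/2)$ claimed in the statement one has to work directly, repeating the argument for ``$B_\rho(x,r) \subset B_j(x,M)$'' in Theorem \ref{jrhoj} with $B_\rho$ replaced by $B_k$: that is, one locates the boundary point of $B_k(x,r)$ at which $j(x,\cdot)$ is maximal, which by symmetry should lie on the line through $x$ and the nearest point of $\partial\Bn$, and then evaluates $j$ there.

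I expect this last step to be the main obstacle, since $B_k(x,r)$ has no explicit description in $\Bn$ and must be controlled geometrically, e.g. via the fact that radial segments through the origin are quasihyperbolic geodesics. In particular I would scrutinise the stated constant $M$: along the outward radial ray from $x$ one has $j_\Bn(x,\cdot)=k_\Bn(x,\cdot)$, so $\partial B_k(x,r)$ meets that ray at a point with $j$-distance exactly $r$, which seems to force the sharp outer radius to be at least $r$, whereas $\log(1+(1+|x|)(e^r-1)/2)<r$ for $|x|<1$. Reconciling this — either by replacing $M$ with the $e^{2r}$-expression above, or with $M=r$ (which also follows from the general estimate $j_G\le k_G$), or by a refinement I have overlooked — is the delicate point of the proof.
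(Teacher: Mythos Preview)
Your approach is exactly the paper's: its entire proof is the single sentence ``Assertion follows from Theorem~\ref{jrhoj} and Proposition~\ref{aputulos}~(1).'' For the inner radius $m$ this works precisely as you describe, applying Theorem~\ref{jrhoj} at radius $r/2$ and then using $k_{\Bn}\le\rho_{\Bn}$ to pass from $B_\rho(x,r/2)$ to $B_k(x,r)$.

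Your scrutiny of the outer radius $M$ is warranted and in fact uncovers an error in the stated corollary. The route ``Theorem~\ref{jrhoj} $+$ Proposition~\ref{aputulos}~(1)'' only yields $B_k(x,r)\subset B_\rho(x,2r)\subset B_j\bigl(x,\log(1+(1+|x|)(e^{2r}-1)/2)\bigr)$, not the stated $M=\log(1+(1+|x|)(e^{r}-1)/2)$. Your radial observation shows the stated $M$ is actually false: for $y$ on the segment from $x$ to $x/|x|$ one has
\[
k_{\Bn}(x,y)=\int_{|x|}^{|y|}\frac{dt}{1-t}=\log\frac{1-|x|}{1-|y|}=j_{\Bn}(x,y),
\]
so $\partial B_k(x,r)$ contains a point at $j$-distance exactly $r$ from $x$, while $\log(1+(1+|x|)(e^{r}-1)/2)<r$ for every $|x|<1$. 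Hence $B_k(x,r)\not\subset B_j(x,M)$ as stated. The correct sharp outer radius is $M=r$ (equivalently, the general inequality $j_G\le k_G$), and the $e^{2r}$-expression you wrote is the bound one honestly obtains from the paper's cited ingredients. There is nothing you have overlooked; the discrepancy is a slip in the paper.
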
\label{kjk}
\begin{proof}
  Assertion follows from Theorem \ref{jrhoj} and Proposition \ref{aputulos} (1).
\end{proof}

\begin{corollary}
  Let $G = \Bn$, $x \in G$ and $r > 0$. Then
  \[
    B_k(x,m) \subset B_j(x,r) \subset B_k(x,M),
  \]
  where
\[
    m = \log \left( 1+\frac{2(e^r-1)}{1+|x|} \right)
  \]
  and
  \[
    M = \min \left\{ 4 \arcsinh \frac{e^r-1}{1+|x|} , 2 \log \frac{2e^r-1-|x|}{1-|x|} \right\}.
  \]
\end{corollary}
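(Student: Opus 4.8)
The plan is to obtain both inclusions directly from Corollary~\ref{jkj} by inverting the roles of the two radii, exactly as Corollary~\ref{rhojrho} follows from Theorem~\ref{jrhoj}. For the inclusion $B_k(x,m) \subset B_j(x,r)$, recall from Corollary~\ref{jkj} that for every $s>0$
\[
  B_k(x,s) \subset B_j \left( x, \log \left( 1+(1+|x|)\frac{e^s-1}{2} \right) \right).
\]
Hence it is enough to take the largest $s=m$ for which the right-hand radius is at most $r$; solving $(1+|x|)(e^m-1)/2 = e^r-1$ gives precisely $m = \log(1+2(e^r-1)/(1+|x|))$, and then $B_k(x,m) \subset B_j(x,r)$ by monotonicity of $B_j(x,\cdot)$ in the radius.

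For the inclusion $B_j(x,r) \subset B_k(x,M)$, Corollary~\ref{jkj} gives for every $s>0$
\[
  B_j \left( x, \max \left\{ \log \left( 1+(1+|x|)\sinh \frac{s}{4} \right), \log \left( 1+(1-|x|)\frac{e^{s/2}-1}{2} \right) \right\} \right) \subset B_k(x,s).
\]
Since $B_j(x,\cdot)$ is increasing in the radius, it suffices to choose the smallest $s=M$ for which the left-hand radius is at least $r$. Because the maximum of two increasing functions exceeds $r$ as soon as one of them does, this smallest $M$ is the smaller of the two solutions of $\log(1+(1+|x|)\sinh(M/4)) = r$ and $\log(1+(1-|x|)(e^{M/2}-1)/2) = r$; solving these gives $M = 4\arcsinh((e^r-1)/(1+|x|))$ and $M = 2\log((2e^r-1-|x|)/(1-|x|))$ respectively, hence $M = \min\{\,\cdot\,,\,\cdot\,\}$ as stated.

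The only point needing care is to check that these algebraic inversions are legitimate throughout the range $r>0$: each of the three functions $s \mapsto \log(1+(1+|x|)(e^s-1)/2)$, $s \mapsto \log(1+(1+|x|)\sinh(s/4))$ and $s \mapsto \log(1+(1-|x|)(e^{s/2}-1)/2)$ is strictly increasing and maps $(0,\infty)$ onto $(0,\infty)$, so each has a well-defined increasing inverse; in particular $2e^r-1-|x| > 1-|x| > 0$, so the logarithm defining the second candidate for $M$ is taken of a number greater than $1$. These are routine monotonicity verifications, and I expect no genuine obstacle.
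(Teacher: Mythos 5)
Your proposal is correct and follows the same route as the paper, whose proof is simply that the assertion follows from Corollary \ref{jkj} by inverting the radii; your algebraic inversions (solving $(1+|x|)(e^m-1)/2=e^r-1$ for $m$ and the two equations $\log(1+(1+|x|)\sinh(M/4))=r$, $\log(1+(1-|x|)(e^{M/2}-1)/2)=r$ for $M$, then taking the minimum) are exactly the intended computation, and your monotonicity justifications are sound.
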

\begin{proof}
  Assertion follows from Corollary \ref{jkj}.
\end{proof}

It is easy to verify that for $x \in \Bn$ we have $B_q(x,r) \subset \Bn$ if and only if $r < (1-|x|)/\sqrt{2(1+|x|^2)}$.

\begin{theorem}\label{jqj}
  Let $G = \Bn$, $x \in G$ and $r \in (0,r_0)$ for $r_0 = (1-|x|)/\sqrt{2(1+|x|^2)}$. We define real numbers $r_1 = |x|/\sqrt{1+|x|^2}$, $r_2=2|x|/(1+|x|^2)$ and intervals $I_1 = [0, \min\{ r_0,r_1 \})$, $I_2 = [ \min \{ r_0 ,r_1 \} , \min \{ r_0,r_2 \} )$ for $|x| < \sqrt{2}-1$, and $I_3 = [r_2,r_0)$ for $|x| < 2-\sqrt{3}$. Then
  \[
    B_j(x,m) \subset B_q(x,r) \subset B_j(x,M),
  \]
  where
  \[
    M = \log \frac{(1-|x|)(1-r^2(1+|x|^2))}{1-|x|-r(1+|x|^2)(r+\sqrt{1-r^2})}
  \]
  and
  \[
    m = \min \{ m_1,m_2 \}
  \]
  for
  \begin{eqnarray*}
    m_1 & = & \left\{ \begin{array}{ll} \displaystyle \log \left( 1+\frac{r(1+|x|^2)(\sqrt{1-r^2}-r |x|)}{(1-|x|) (1-r^2 (1+|x|^2))} \right), & \begin{array}{l}{\displaystyle r \in I_1 \textrm{ or}}\\{\displaystyle |x| < \sqrt{2}-1 \textrm{ and } r \in I_2,}\end{array}\\ \infty, & \textrm{otherwise},  \end{array} \right.\\
    m_2 & = & \left\{ \begin{array}{ll} \displaystyle \log \frac{(1+|x|)(1-r^2(1+|x|^2))}{1+|x|-r(r+\sqrt{1-r^2})(1+|x|^2)}, & |x| < 2-\sqrt{3} \textrm{ and } r \in I_3, \\ \infty, & \textrm{otherwise.} \end{array} \right.
  \end{eqnarray*}
  Moreover, the inclusions are sharp and $M/m \to 1$ as $r \to 0$.
\end{theorem}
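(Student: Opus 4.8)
The plan is to reduce everything to the line $L=\R x$ through $0$ and $x$ and to the two points where $\partial B_q(x,r)$ meets it. First, since both $B_q(x,r)$ and every ball $B_j(x,t)$ are invariant under the orthogonal maps fixing $L$, both inclusions may be verified after a rotation putting $x=|x|e_1$; write $s=|x|\in[0,1)$ and $D=1-r^2(1+s^2)$, which is positive (indeed $D>1/2$, as $r<r_0$), and recall that $r<r_0$ gives $\overline{B_q(x,r)}\subset\Bn$. By Proposition \ref{aputulos}(4), $B_q(x,r)=B^n(c,R)$ with $c=\tfrac{s}{D}e_1$ and $R=\tfrac{r(1+s^2)\sqrt{1-r^2}}{D}$; since $c\in L$, the two points of $\partial B_q(x,r)\cap L$ are antipodal, say $z_2=(|c|+R)e_1$ (the one with $|z_2|>s$) and $z_1=(|c|-R)e_1$, and $[z_1,z_2]$ is a diameter of $B_q(x,r)$. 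I would record the elementary computations $|x-z_1|=\tfrac{r(1+s^2)(\sqrt{1-r^2}-rs)}{D}$, $|x-z_2|=\tfrac{r(1+s^2)(\sqrt{1-r^2}+rs)}{D}$, the identity $|x-c|+|x-z_1|=R$ (so $B^n(x,|x-z_1|)$ is internally tangent to $B_q(x,r)$ at $z_1$), and the equivalences $|z_1|\le s\iff r\le r_2$ and $0\in\overline{B_q(x,r)}\iff r\ge r_1$, where $r_1=q(x,0)$; the hypotheses $|x|<\sqrt2-1$ and $|x|<2-\sqrt3$ are precisely $r_1<r_0$ and $r_2<r_0$, so the intervals $I_1,I_2,I_3$ partition $[0,r_0)$ and merely record which regime $z_1$ is in. Finally I would note that $j(x,\cdot)$ is strictly increasing along each of the two rays of $L$ issuing from $x$ — a short case check on $\log(1+|x-y|/(1-\max\{s,|y|\}))$ — so each $S_j(x,t)$ meets $L$ in exactly two points, and the inner one moves strictly inward as $t$ grows.

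With this setup the theorem is the assertion $B_j(x,j(x,z_1))\subset B_q(x,r)\subset B_j(x,j(x,z_2))$ with both inclusions sharp; the displayed $M$, $m_1$, $m_2$ are just $j(x,z_2)$ and $j(x,z_1)$ rewritten, the $m_1$/$m_2$ split depending on whether $|z_1|\le s$ (then $\min\{d(x),d(z_1)\}=1-s$) or $|z_1|>s$ (then $=1-|z_1|$), the ``$\infty$'' branch marking the wrong regime, and $j(x,z_1)\le j(x,z_2)$ (strict for $x\neq0$) follows at once since $|x-z_2|>|x-z_1|$ while $\min\{d(x),d(z_2)\}=1-|z_2|\le 1-\max\{s,|z_1|\}=\min\{d(x),d(z_1)\}$. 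For the \emph{upper inclusion} I would set $M=j(x,z_2)$ and invoke Lemma \ref{jlemma}: the largest Euclidean ball inside $B_j(x,M)$ is the ball $B'$ whose diameter is the chord $S_j(x,M)\cap L=[z_2,w]$, where $w$ is the inner endpoint (note $z_2\in S_j(x,M)$ is the unique point of that chord with $e_1$-coordinate $>s$). Since $j(x,w)=M>j(x,z_1)$, monotonicity forces $w$ strictly inward of $z_1$, so $[z_1,z_2]\subset[w,z_2]$, whence the ball with diameter $[z_1,z_2]$, which is $B_q(x,r)$, is contained in the ball with diameter $[w,z_2]$, which is $B'\subset B_j(x,M)$. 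Sharpness is immediate because $z_2\in\partial B_q(x,r)\cap\partial B_j(x,M)$: points of $B_q(x,r)$ tending to $z_2$ have $j$-distance from $x$ tending to $M$. Writing $z_2=|z_2|e_1$ with $|z_2|=\tfrac{s+r(1+s^2)\sqrt{1-r^2}}{D}$ one gets $M=\log\tfrac{1-s}{1-|z_2|}$, which simplifies to the stated $M$.

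For the \emph{lower inclusion} I would set $m=j(x,z_1)$. By monotonicity $z_1$ is the inner point of $S_j(x,m)\cap L$, so Lemma \ref{BjBn} gives $B_j(x,m)\subset B^n(x,|x-z_1|)$, and the identity $|x-c|+|x-z_1|=R$ gives $B^n(x,|x-z_1|)\subset B^n(c,R)=B_q(x,r)$; hence $B_j(x,m)\subset B_q(x,r)$. Sharpness: for $m'>m$ the inner point of $S_j(x,m')\cap L$ lies strictly inward of $z_1$, hence outside $\overline{B_q(x,r)}$, yet on $\partial B_j(x,m')$. Evaluating $m=j(x,z_1)$: if $r\le r_2$ then $|z_1|\le s$, $\min\{d(x),d(z_1)\}=1-s$ and $m=\log(1+|x-z_1|/(1-s))=m_1$; if $r>r_2$ — which can occur only when $r_2<r_0$, i.e. $|x|<2-\sqrt3$ — then $|z_1|>s$, $\min\{d(x),d(z_1)\}=1-|z_1|$ and $m=\log(1+|x-z_1|/(1-|z_1|))=m_2$; in each case the statement sets the other of $m_1,m_2$ to $\infty$, so $m=\min\{m_1,m_2\}$. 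The limit $M/m\to1$ as $r\to0$ follows by expanding ($z_1,z_2\to x$, $|x-z_1|,|x-z_2|\sim r(1+s^2)$, $1-|z_1|,1-|z_2|\to 1-s$, so $m,M\sim r(1+s^2)/(1-s)$), or by l'H\^opital exactly as in Theorem \ref{jrhoj}. I expect the only real labor to be the bookkeeping of the first paragraph — the algebraic formulas for $|x-z_i|$, the relation $|x-c|+|x-z_1|=R$, the three equivalences $|z_1|\le s\iff r\le r_2$, $r_1<r_0\iff|x|<\sqrt2-1$, $r_2<r_0\iff|x|<2-\sqrt3$, and the simplification of $j(x,z_1),j(x,z_2)$ to $m_1,m_2,M$; once these are checked, the geometry is delivered cleanly by Lemmas \ref{BjBn} and \ref{jlemma}.
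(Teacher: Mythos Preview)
Your proposal is correct and follows essentially the same strategy as the paper: rotate so that $x=|x|e_1$, identify the two antipodal points $z_1,z_2$ of $\partial B_q(x,r)$ on the line $L=\R e_1$, set $m=j(x,z_1)$, $M=j(x,z_2)$, and sandwich through Euclidean balls, with the case split for $m$ coming from whether $|z_1|\le |x|$ or $|z_1|>|x|$. The one genuine difference is in the \emph{upper} inclusion: the paper uses the chain $B_q(x,r)\subset B^n(x,|x-z_2|)\subset B_j(x,M)$, invoking Lemma~\ref{BjBn} for the second step, whereas you use $B_q(x,r)\subset B'\subset B_j(x,M)$ with $B'$ the maximal inscribed Euclidean ball of Lemma~\ref{jlemma}; both routes work and require the same check (that $z_2$ is the point of $S_j(x,M)\cap L$ with larger modulus), so this is a cosmetic variation rather than a different idea. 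One small remark: your phrase ``by monotonicity $z_1$ is the inner point of $S_j(x,m)\cap L$'' needs a one-line justification that, of the two points of $S_j(x,m)\cap L$, the one on the $-e_1$ side of $x$ always has the smaller modulus (this is what Lemma~\ref{BjBn} requires, and it is not literally a consequence of monotonicity along rays from $x$); the paper glosses over the same point.
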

\begin{proof}
  Because of symmetry of $G$ we may assume $x = t e_1$ for $t \in [0,1)$. Since $\partial \Ball{q}{x}{r}$ intersects the line $l = \{ z\in \Rn \colon z=e_1 s, s\in (-\infty,\infty) \}$ twice we denote $(\partial \Ball{q}{x}{r}) \cap l = \{ y_1,y_2 \}$. We assume that $y_1 \in (x,e_1)$ and $y_2 \in (x,-e_1)$.

  We prove first that $\Ball{q}{x}{r} \subset \Ball{j}{x}{M}$. Our idea is to show that
  \begin{equation}\label{Mjkj}
    \Ball{q}{x}{r} \subset B^n(x,|x-y_1|) \subset \Ball{j}{x}{M}.
  \end{equation}
  The first inclusion follows from Proposition \ref{aputulos} (4) and the observation that $|x| \le |x|/(1-r^2(1+|x|^2))$.

  The second inclusion of \eqref{Mjkj} follows from Lemma \ref{BjBn}, because $q(x,y_1) = r$ is equivalent to
  \[
    |y_1| = \frac{|x|+r\sqrt{1-r^2}(1+|x|^2)}{1-r^2(1+|x|)^2}
  \]
  and thus
  \[
    j(x,y_1) = \log \left( 1+\frac{|y_1|-|x|}{1-|y_1|} \right) = \log \frac{(1-|x|)(1-r^2(1+|x|^2))}{1-|x|-r(1+|x|^2)(r+\sqrt{1-r^2})} = M.
  \]

  We prove next that $\Ball{j}{x}{m} \subset \Ball{q}{x}{r}$. Our idea is to show that
  \begin{equation}\label{mjkj}
    \Ball{j}{x}{m} \subset B^n(x,|x-y_2|) \subset \Ball{q}{x}{r},
  \end{equation}
  where the second inclusion follows from Proposition \ref{aputulos} (4) and the observation that $|x| \le |x|/(1-r^2(1+|x|^2))$.

  The first inequality of \eqref{mjkj} follows from Lemma \ref{BjBn}, if $j(x,y_2) = m$. To show this we consider three cases: $y_2 \in [0,x)$, $y_2 \in (0,-x]$ and $y_2 \in (-x,-x/|x|)$.

  In the case $y_2 \in [0,x)$, $q(x,y_2) = r$ is equivalent to
  \[
    |y_2| = \frac{|x|+r\sqrt{1-r^2}(1+|x|^2)}{1-r^2(1+|x|^2)}
  \]
  and thus
  \[
    j(x,y_2) = \log \left( 1+\frac{|x|-|y_2|}{1-|x|} \right) = \log \left( 1+\frac{r(1+|x|^2)(\sqrt{1-r^2}-r |x|)}{(1-|x|) (1-r^2 (1+|x|^2))} \right) = m_1.
  \]

  In the case $y_2 \in [0,-x)$, $q(x,y_2) = r$ is equivalent to
  \begin{equation}\label{mcase2}
    |y_2| = \frac{|x|-r\sqrt{1-r^2}(1+|x|^2)}{1-r^2(1+|x|^2)}
  \end{equation}
  and thus
  \[
    j(x,y_2) = \log \left( 1+\frac{|x|+|y_2|}{1-|x|} \right) = \log \left( 1+ \frac{r(1+|x|^2)(\sqrt{1-r^2}-r |x|)}{(1-|x|)(1-r^2(1+|x|^2))} \right) = m_1.
  \]

  In the case $y_2 \in (-x,-x/|x|)$, $q(x,y_2) = r$ is equivalent to \eqref{mcase2} and thus
  \[
    j(x,y_2) = \log \left( 1+\frac{|x|+|y_2|}{1-|y_2|} \right) = \log \frac{(1+|x|)(1-r^2(1+|x|^2))}{1+|x|-r(r+\sqrt{1-r^2})(1+|x|^2)} = m_2.
  \]

 Sharpness of $m$ and $M$ follow from \eqref{Mjkj}, \eqref{mjkj} and the selection of $y_1$ and $y_2$.

  We finally show that $M/m \to 1$ as $r \to 0$. By the l'H\^opital's rule we obtain
  \[
    \lim_{r \to 0} \frac{M}{m} = \lim_{r \to 0} \frac{M}{m_1} = \lim_{r \to 0} \frac{(1+\alpha-\beta)(1-|x|+\gamma)}{(1-\alpha-\beta)(1-|x|-\gamma)} = 1,
  \]
  where $\alpha = 2r \sqrt{1-r^2}|x|$, $\beta = r^2(1-|x|^2)$ and $\gamma = r(\sqrt{1-r^2}-r)(1+|x|^2)$.
\end{proof}

\begin{figure}[!ht]
  \begin{center}
    \includegraphics[height=50mm]{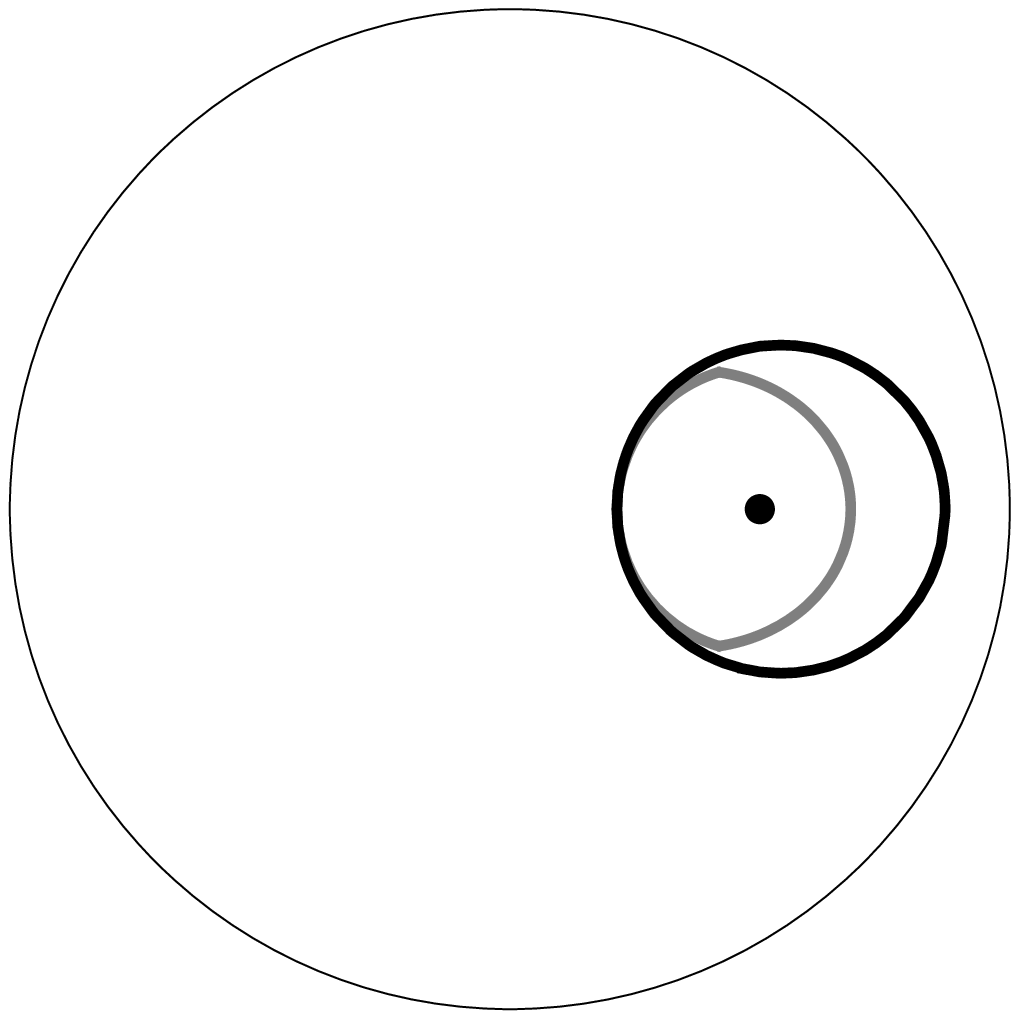}\hspace{1cm}
    \includegraphics[height=50mm]{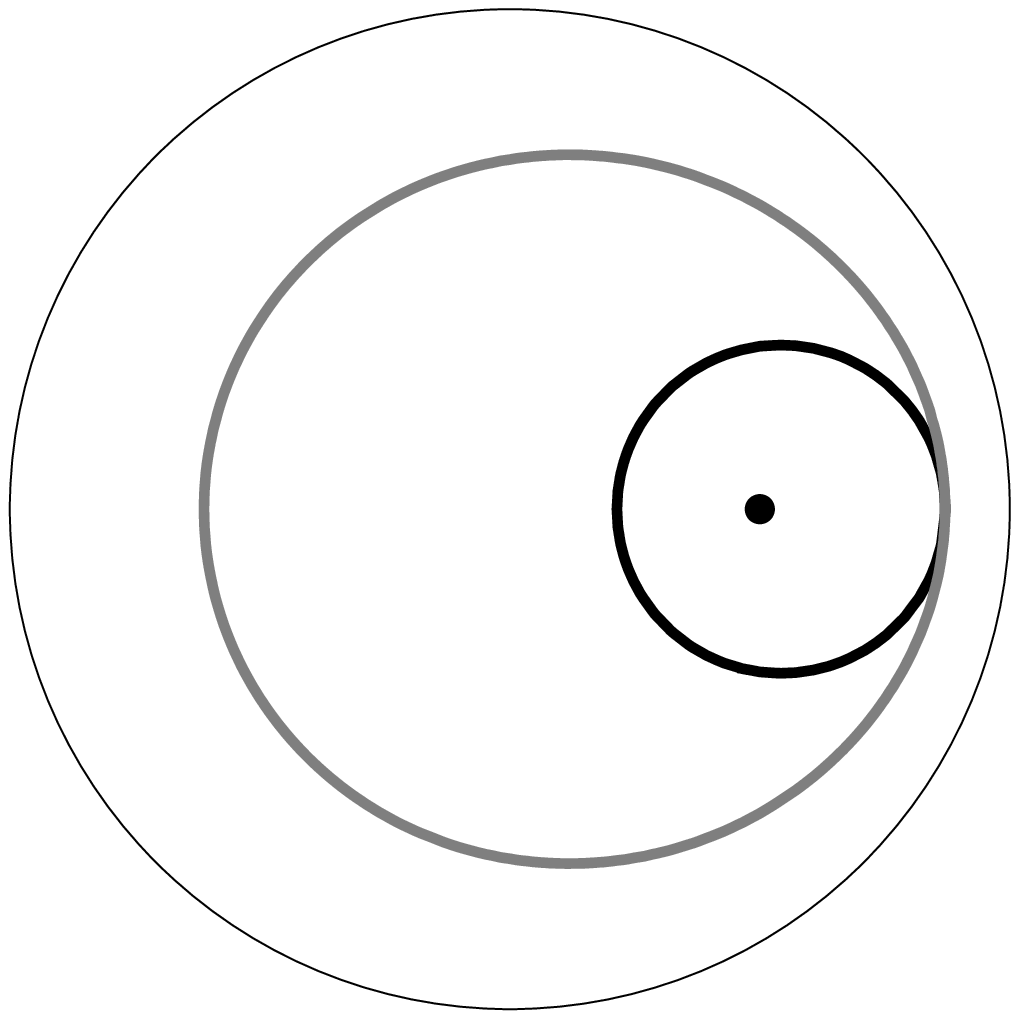}
    \caption{An example of inclusions of chordal disks (black) and $j$-metric disks (gray) in the unit disk. The black dot is the center of the disks and the black thin circle is the unit circle.\label{fig1}}
  \end{center}
\end{figure}

\begin{corollary}\label{qjq}
  Let $G = \Bn$, $x \in G$ and $r > 0$. Then
  \[
    B_q(x,m) \subset B_j(x,r) \subset B_q(x,M),
  \]
  where
  \[
    m = \frac{(1-e^{-r})(1-|x|)}{\sqrt{1+|x|^2}\sqrt{1+(e^{-r}(1-|x|)-1)^2}}
  \]
  and
  \begin{eqnarray*}
    M & = & \left\{ \begin{array}{ll} \displaystyle \frac{(e^r-1)(1-|x|)}{\sqrt{1+|x|^2}\sqrt{1+(e^r(1-|x|)-1)^2}}, & \displaystyle r \le \log \frac{1+|x|}{1-|x|},\\ \displaystyle \frac{(e^r-1)(1+|x|)}{e^r \sqrt{1+|x|^2}\sqrt{1+(e^{-r}(1-|x|)-1)^2}}, & \displaystyle r > \log \frac{1+|x|}{1-|x|}.\\ \end{array} \right.\\
  \end{eqnarray*}

  Moreover, the inclusions are sharp and $M/m \to 1$ as $r \to 0$.
\end{corollary}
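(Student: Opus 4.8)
The plan is to use the rotational symmetry of $\Bn$ to reduce to $x=|x|e_1$ and then to locate the two points in which $\partial \Ball{j}{x}{r}$ meets the line $l=\{se_1\colon s\in\R\}$ through $x$ and the nearest boundary point $e_1$. Writing $\partial \Ball{j}{x}{r}\cap l=\{y_1,y_2\}$ with $|y_2|\le|y_1|$ as in Lemma \ref{BjBn}, a direct computation from the definition of $j$ (these points appear already in Lemma \ref{jlemma}) gives $|y_1|=1-e^{-r}(1-|x|)$, hence $|x-y_1|=(1-|x|)(1-e^{-r})$, while $|x-y_2|$ equals $(e^r-1)(1-|x|)$ when $r\le\log\frac{1+|x|}{1-|x|}$ and $(1+|x|)(1-e^{-r})$ when $r>\log\frac{1+|x|}{1-|x|}$; substituting into the formula for $q$ then recovers precisely the claimed $m=q(x,y_1)$ and $M=q(x,y_2)$, with the case split for $M$ reflecting the three possible positions of $y_2$ on $l$ (between $0$ and $x$, between $0$ and $-x$, or beyond $-x$ toward $-e_1$).

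For $\Ball{q}{x}{m}\subset \Ball{j}{x}{r}$ I would invoke Lemma \ref{BjBn}, which yields $B^n(x,|x-y_1|)\subset \Ball{j}{x}{r}$, and then show $\Ball{q}{x}{m}\subset B^n(x,|x-y_1|)$. By Proposition \ref{aputulos}(4) the chordal ball $\Ball{q}{x}{m}$ is a Euclidean ball whose centre lies on $l$ on the $e_1$-side of $x$; since $q(x,y_1)=m$ the point $y_1$ lies on $\partial \Ball{q}{x}{m}$, and because $|y_1|>|x|$ it is the point of that Euclidean sphere farthest from $x$, so $\Ball{q}{x}{m}\subset B^n(x,|x-y_1|)$ as needed. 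Symmetrically, for $\Ball{j}{x}{r}\subset \Ball{q}{x}{M}$ I would use $\Ball{j}{x}{r}\subset B^n(x,|x-y_2|)$ from Lemma \ref{BjBn} and prove $B^n(x,|x-y_2|)\subset \Ball{q}{x}{M}$: for a point $w$ with $|x-w|=\delta$ fixed, $q(x,w)$ is largest when $|w|$ is smallest, i.e.\ when $w$ is whichever of $x\pm\delta e_1$ is nearer the origin, and the function $\delta\mapsto\delta/\sqrt{1+(|x|-\delta)^2}$ is increasing on the relevant range, so the maximum of $q(x,\cdot)$ over $\overline{B^n(x,|x-y_2|)}$ is attained at $y_2$ and equals $M$.

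Sharpness of both radii is then immediate, since $y_1\in\partial \Ball{q}{x}{m}\cap\partial \Ball{j}{x}{r}$ and $y_2\in\partial \Ball{q}{x}{M}\cap\partial \Ball{j}{x}{r}$, and $M/m\to1$ as $r\to0$ follows by l'H\^opital's rule as in Theorem \ref{jqj}, since $y_1,y_2\to x$. The main obstacle is the last step of the outer inclusion: one must check that $q(x,\cdot)$ attains its maximum over the Euclidean ball $B^n(x,|x-y_2|)$ at the axial point $y_2$ and not at some off-axis boundary point. This reduces to the monotonicity of $\delta\mapsto\delta/\sqrt{1+(|x|-\delta)^2}$, valid for $\delta<(1+|x|^2)/|x|$, together with the estimate $|x-y_2|<(1+|x|^2)/|x|$, which holds because $(e^r-1)(1-|x|)\le 2|x|<(1+|x|^2)/|x|$ in the first regime and $(1+|x|)(1-e^{-r})<1+|x|<(1+|x|^2)/|x|$ in the second.
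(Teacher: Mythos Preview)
Your argument is correct and follows the same line as the paper: the paper states this result as a corollary of Theorem~\ref{jqj} without a separate proof, and your direct argument simply re-runs the method of that theorem (Lemma~\ref{BjBn} together with Proposition~\ref{aputulos}(4), sandwiching through the axial points $y_1,y_2$) with the roles of the $j$- and $q$-balls interchanged. The monotonicity check you flag as the main obstacle is precisely the ingredient that makes the Euclidean sandwich $B^n(x,|x-y_2|)\subset B_q(x,M)$ go through, and your verification of it is fine.
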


\begin{remark}
  In Corollary \ref{qjq}, we have $\Ball{q}{x}{M} \subset \B^n$ if $M \le (1-|x|)/\sqrt{2(1+|x|^2)}$, which is equivalent to
  \[
    r \le \log \frac{2(1+|x|)}{1+2|x|-|x|^2}.
  \]
\end{remark}

\begin{proof}[Proof of Theorem \ref{mainthm1}]
  The radii $m_1$ and $m_2$ follow from Theorem \ref{jrhoj} and Corollary \ref{jkj}.

  The radius $m_3$ follows from Theorem \ref{jqj}.
\end{proof}

\begin{theorem}\label{rhoqrho}
  Let $G = \Bn$, $x \in G$ and $r \in (0,r_0)$ for $r_0 = (1-|x|)/\sqrt{2(1+|x|^2)}$. Then
  \[
    B_\rho(x,m) \subset B_q(x,r) \subset B_\rho(x,M),
  \]
  where
  \[
    m = 2 \arcsinh \frac{r(\sqrt{1-r^2}-r|x|)(1+|x|^2)}{\sqrt{1-|x|^2}a\sqrt{1-a^{-2}(|x|-r\sqrt{1-r^2}(1+|x|^2))^2}}
  \]
   and
  \[
    M = 2 \arcsinh \frac{r(\sqrt{1-r^2}+r|x|)(1+|x|^2)}{\sqrt{1-|x|^2}a\sqrt{1-a^{-2}(|x|+r\sqrt{1-r^2}(1+|x|^2))^2}}
  \]
  for $a=1-r^2(1+|x|^2)$.

  Moreover, the inclusions are sharp and $M/m \to 1$ as $r \to 0$.
\end{theorem}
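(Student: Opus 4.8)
The plan is to exploit that, by Proposition \ref{aputulos} (4) and (3), both $B_q(x,r)$ and the hyperbolic balls $B_\rho(x,t)$ are Euclidean balls, together with the single analytic fact that, by Proposition \ref{aputulos} (2),
\[
  \rho(x,y)=2\arcsinh\frac{|x-y|}{\sqrt{1-|x|^2}\sqrt{1-|y|^2}}
\]
is, for $y\in\Bn$, a strictly increasing function of $|x-y|$ and, separately, a strictly increasing function of $|y|$. By the symmetry of $\Bn$ we may assume $x=|x|e_1$. Put $a=1-r^2(1+|x|^2)\in(0,1)$. Then $B_q(x,r)=B^n(c,\sigma)$ with $c=x/a$ on the positive $e_1$-axis, $|c|=|x|/a\ge|x|$, and $\sigma=r(1+|x|^2)\sqrt{1-r^2}/a$; since $r<r_0<1/\sqrt{1+|x|^2}$ gives $|x|r<\sqrt{1-r^2}$, the point $x$ lies strictly inside this ball. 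The Euclidean sphere $S^{n-1}(c,\sigma)$ meets the $e_1$-axis in $y_1=s_1e_1$ and $y_2=s_2e_1$ with $s_1=(|x|+r\sqrt{1-r^2}(1+|x|^2))/a>|x|$ and $s_2=(|x|-r\sqrt{1-r^2}(1+|x|^2))/a<|x|$, and the bound $r<r_0$ forces $-1<s_2<s_1<1$, i.e. $B_q(x,r)\subset\Bn$.

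For the inclusion $B_q(x,r)\subset B_\rho(x,M)$ I would show $\rho(x,y)\le M:=\rho(x,y_1)$ for every $y$ in the closed ball $\overline{B^n(c,\sigma)}$. Since $x$ lies inside this ball and $c-x$ is a positive multiple of $e_1$, we have $|x-y|\le|x-c|+\sigma=s_1-|x|=|x-y_1|$ and $|y|\le|c|+\sigma=s_1=|y_1|$ for every such $y$, both bounds being attained at $y=y_1$; the joint monotonicity of $\rho(x,\cdot)$ then gives $\rho(x,y)\le\rho(x,y_1)$. For the inclusion $B_\rho(x,m)\subset B_q(x,r)$, note that the largest $m$ for which it holds equals $\min\{\rho(x,y)\colon y\in S_q(x,r)\}$, because the family $B_\rho(x,t)$ increases continuously with $t$ and exhausts $\Bn$, so the critical parameter is the one at which $\overline{B_\rho(x,t)}$ first touches $S_q(x,r)$ from inside. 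For $y\in S^{n-1}(c,\sigma)$, again with $x$ inside, the reverse triangle inequality gives $|x-y|\ge\sigma-|x-c|=|x|-s_2=|x-y_2|$ and $|y|\ge\big||c|-\sigma\big|=|s_2|=|y_2|$, both attained at $y=y_2$, whence $\rho(x,y)\ge\rho(x,y_2)=:m$. Finally, using $1-a=r^2(1+|x|^2)$, a direct computation yields $|x-y_1|=r(1+|x|^2)(\sqrt{1-r^2}+r|x|)/a$, $1-|y_1|^2=1-a^{-2}(|x|+r\sqrt{1-r^2}(1+|x|^2))^2$, and the analogous expressions at $y_2$ with $\sqrt{1-r^2}-r|x|$ and $|x|-r\sqrt{1-r^2}(1+|x|^2)$; substituting these into $\rho(x,y_1)$ and $\rho(x,y_2)$ gives exactly the stated formulas for $M$ and $m$. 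Sharpness is immediate since $y_1,y_2\in S_q(x,r)$ realise $\rho(x,y_1)=M$ and $\rho(x,y_2)=m$.

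For the asymptotics, as $r\to0$ one has $a\to1$, $\sqrt{1-r^2}\to1$ and $a^{-2}(|x|\mp r\sqrt{1-r^2}(1+|x|^2))^2\to|x|^2$, so both $m$ and $M$ behave like $2\arcsinh\!\big(r(1+|x|^2)/(1-|x|^2)\big)$; applying l'H\^opital's rule to $M/m$, exactly as in the proof of Theorem \ref{jrhoj}, gives the limit $1$.

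I expect the only genuinely non-routine point to be the observation that over the chordal ball (resp. the chordal sphere) the point maximising (resp. minimising) the Euclidean distance to $x$ and the point of largest (resp. smallest) modulus coincide, namely the axis point $y_1$ (resp. $y_2$); it is precisely this coincidence that lets the two separate monotonicities of $\rho(x,\cdot)$ pin down the extremum. Everything after that is the algebraic identification of $\rho(x,y_1)$ and $\rho(x,y_2)$ with the displayed radii, for which the relation $1-a=r^2(1+|x|^2)$ is the only simplification needed.
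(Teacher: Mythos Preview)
Your proof is correct and identifies the same extremal points $y_1,y_2$ on the $e_1$-axis as the paper does, with $M=\rho(x,y_1)$ and $m=\rho(x,y_2)$. The verification of the inclusions differs slightly: the paper sandwiches through Euclidean balls centred at $x$, using both parts (3) and (4) of Proposition~\ref{aputulos}. Since the Euclidean centre of $B_\rho(x,t)$ lies on the origin side of $x$ while the Euclidean centre of $B_q(x,r)$ lies on the far side, one gets directly $B_\rho(x,m)\subset B^n(x,|x-y_2|)\subset B_q(x,r)$ and $B_q(x,r)\subset B^n(x,|x-y_1|)\subset B_\rho(x,M)$. Your route instead uses only part (4) together with the joint monotonicity of the formula in part (2); this avoids having to locate the Euclidean centre of the hyperbolic ball, at the cost of the extra observation that the point of extreme modulus on $S_q(x,r)$ coincides with the point of extreme Euclidean distance from $x$. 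Both arguments are equally short, and your connectedness justification for ``largest $m$ equals $\min_{S_q}\rho(x,\cdot)$'' is a clean way to phrase what the paper leaves implicit.
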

\begin{proof}
  We prove the first inclusion $B_\rho(x,m) \subset B_q(x,r)$. Let $y \in \partial \Ball{\rho}{x}{m}$ with $|y| \le |z|$ for all $z \in \partial \Ball{\rho}{x}{m}$. By Lemma \ref{aputulos} (3) and (4)
  \[
    \Ball{\rho}{x}{m} \subset B^n(x,|x-y|) \subset \Ball{q}{x}{r}.
  \]
  Since $q(x,y) = r$ is equivalent to
  \[
    y = \frac{x}{|x|} \frac{|x|-r \sqrt{1-r^2}(1+|x|^2)}{1-r^2(1+|x|^2)}
  \]
  we obtain $\rho(x,y) = m$. The radius $m$ is sharp by the selection of $y$.

  We prove next the inclusion $B_q(x,r) \subset B_\rho(x,M)$. Let $y \in \partial \Ball{q}{x}{r}$ with $|y| \ge |z|$ for all $z \in \partial \Ball{q}{x}{r}$. By Lemma \ref{aputulos} (3) and (4)
  \[
    \Ball{q}{x}{r} \subset B^n(x,|x-y|) \subset \Ball{\rho}{x}{M}.
  \]
  Since $q(x,y) = r$ is equivalent to
  \[
    |y| = \frac{|x|+r \sqrt{1-r^2}(1+|x|^2)}{1-r^2(1+|x|^2)}
  \]
  we obtain $\rho(x,y) = M$. The radius $M$ is sharp by the selection of $y$.

  Clearly $M/m \to 1$ as $r \to 0$ and the assertion follows.
\end{proof}

\begin{figure}[!ht]
  \begin{center}
    \includegraphics[height=50mm]{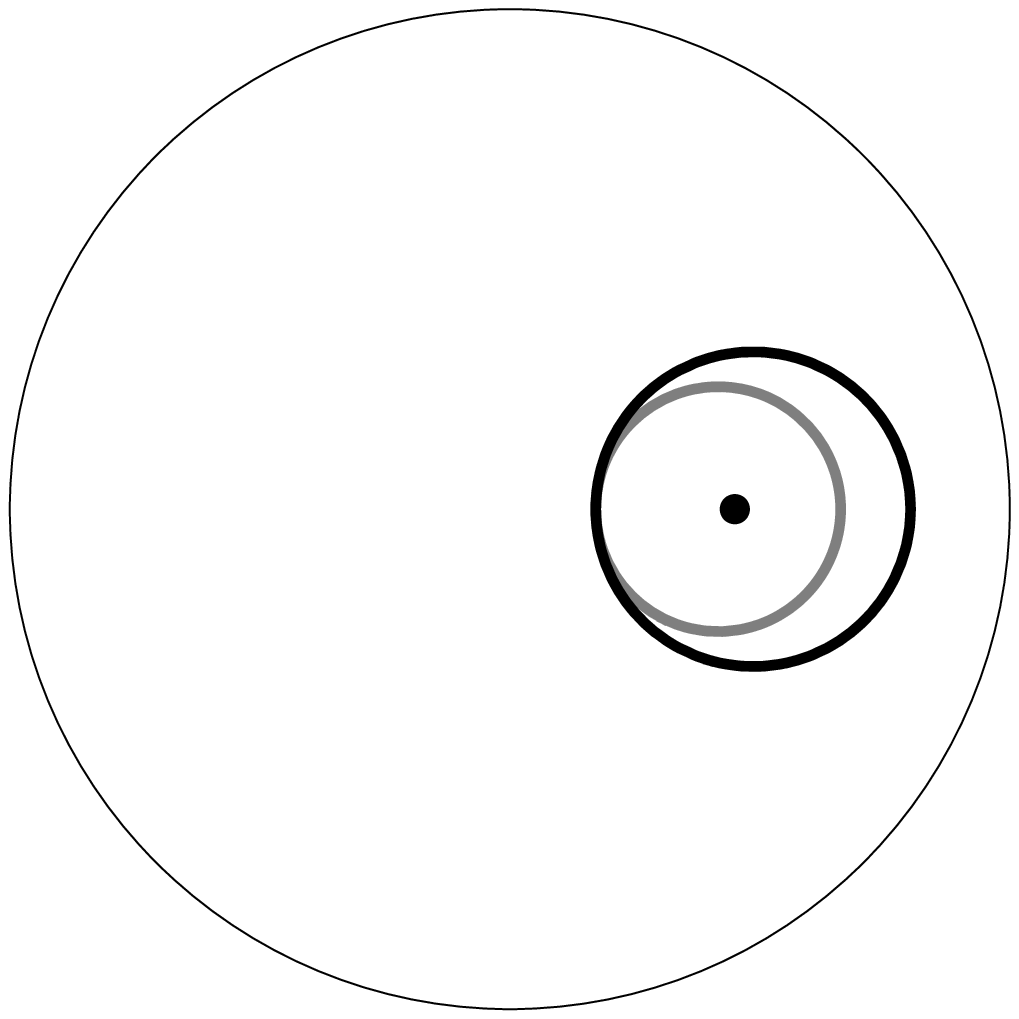}\hspace{1cm}
    \includegraphics[height=50mm]{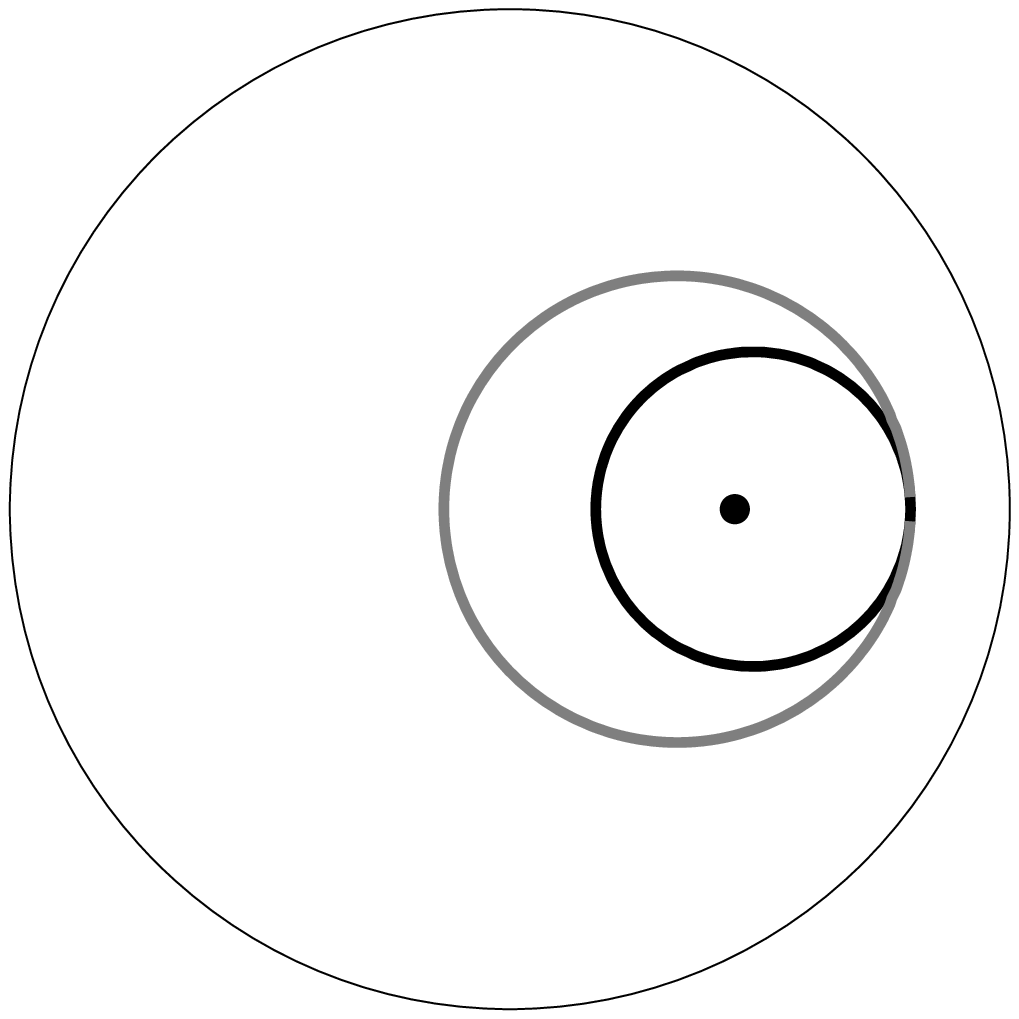}
    \caption{An example of inclusions of hyperbolic disks (gray) and chordal disks (black) in the unit disk. The black dot is the center of the disks and the black thin circle is the unit circle.\label{fig1}}
  \end{center}
\end{figure}

\begin{corollary}\label{kqk}
  Let $G = \Bn$, $x \in G$ and $r \in (0,r_0)$ for $r_0 = (1-|x|)/\sqrt{2(1+|x|^2)}$. Then
  \[
    B_k(x,m) \subset B_q(x,r) \subset B_k(x,M),
  \]
  where
  \[
    m = 2 \arcsinh \frac{r(\sqrt{1-r^2}-r|x|)(1+|x|^2)}{\sqrt{1-|x|^2}a\sqrt{1-a^{-2}(|x|-r\sqrt{1-r^2}(1+|x|^2))^2}}
  \]
   and
  \[
    M = 4 \arcsinh \frac{r(\sqrt{1-r^2}+r|x|)(1+|x|^2)}{\sqrt{1-|x|^2}a\sqrt{1-a^{-2}(|x|-r\sqrt{1-r^2}(1+|x|^2))^2}}
  \]
  for $a=1-r^2(1+|x|^2)$.
\end{corollary}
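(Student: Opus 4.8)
The plan is to deduce the corollary from Theorem \ref{rhoqrho} together with the comparison between the hyperbolic and quasihyperbolic metrics in Proposition \ref{aputulos}(1). Write the conclusion of Theorem \ref{rhoqrho} as $B_\rho(x,\mu)\subset B_q(x,r)\subset B_\rho(x,\nu)$. Proposition \ref{aputulos}(1) gives $k_\Bn\le\rho_\Bn\le 2k_\Bn$, hence
\[
  B_\rho(x,s)\subset B_k(x,s)\subset B_\rho(x,2s)\qquad(s>0).
\]
Chaining $B_k(x,\mu/2)\subset B_\rho(x,\mu)\subset B_q(x,r)$ on one side and $B_q(x,r)\subset B_\rho(x,\nu)\subset B_k(x,2\nu)$ on the other already yields inclusions of the desired form; the actual work is to improve these crude radii to the ones in the statement.

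For that I would imitate the proof of Theorem \ref{rhoqrho}, reducing everything to Euclidean balls. With $x=te_1$, let $y_1,y_2$ be the intersections of $\partial B_q(x,r)$ with the $e_1$-axis, $y_1$ the one farther from $0$ and $y_2$ the nearer one; by Proposition \ref{aputulos}(4) one has $B_q(x,r)\subset B^n(x,|x-y_1|)$ and $B^n(x,|x-y_2|)\subset B_q(x,r)$. Both $y_1$ and $y_2$ sit on radial segments issuing from $x$, so $k(x,y_1)$ and $k(x,y_2)$ are elementary integrals of $1/d$ along those segments (and, incidentally, agree with $j(x,y_1)$ and $j(x,y_2)$ when the segment runs towards $\partial\Bn$); one then checks, much as in Lemma \ref{BjBn} for $j$-balls, that the smallest quasihyperbolic ball about $x$ containing $B^n(x,|x-y_1|)$ has radius $k(x,y_1)$ and the largest quasihyperbolic ball about $x$ inside $B^n(x,|x-y_2|)$ has radius $k(x,y_2)$. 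Evaluating these two quantities produces $M$ and $m$; sharpness follows from the extremal choice of $y_1,y_2$, exactly as in Theorem \ref{rhoqrho}, and $M/m\to 1$ as $r\to 0$ from the same l'H\^opital computation, since $|y_1|,|y_2|\to|x|$.

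I expect the main obstacle to be precisely this improvement step, i.e. the bookkeeping of the radii. The comparison in Proposition \ref{aputulos}(1) is two-sided and carries a genuine factor $2$, so for each inclusion one must be careful which side of it to use; and to land on the closed forms in the statement one must work with the exact value of the quasihyperbolic distance along the relevant ray — where the difference in shape between the quasihyperbolic and hyperbolic balls is felt — rather than with the crude bound displayed above. Beyond that, no ingredient is needed that is not already present in Theorems \ref{rhoqrho} and \ref{jqj}.
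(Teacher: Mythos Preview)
Your first paragraph is already the whole proof the paper intends. The corollary carries no separate argument in the paper; it sits immediately after Theorem~\ref{rhoqrho}, and by analogy with Corollary~\ref{jkj} (whose entire proof is the sentence ``Assertion follows from Theorem~\ref{jrhoj} and Proposition~\ref{aputulos}~(1)''), the derivation here is just: combine $B_\rho(x,\mu)\subset B_q(x,r)\subset B_\rho(x,\nu)$ from Theorem~\ref{rhoqrho} with the ball inclusions coming from Proposition~\ref{aputulos}~(1). The radii in the statement are not ``improved'' versions of the chained radii; they \emph{are} the chained radii --- note that the corollary's $m$ is literally the $m$ of Theorem~\ref{rhoqrho}, and its $M$ is obtained from that theorem's $M$ by doubling the coefficient in front of $\arcsinh$, exactly the factor $2$ introduced by Proposition~\ref{aputulos}~(1). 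There is no further bookkeeping.

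Your second and third paragraphs therefore propose work that the paper does not do and that the statement does not require. A direct computation of $k(x,y_1)$ and $k(x,y_2)$ along the diameter would in fact produce \emph{different} (sharper) constants than those displayed, since the stated formulas are visibly hyperbolic-metric expressions ($2\arcsinh$ and $4\arcsinh$ of the quantities from Theorem~\ref{rhoqrho}), not closed forms of quasihyperbolic integrals. So that route, while sound in principle, would prove a different corollary rather than the one written. Also, a small slip in your chain: from $k\le\rho$ you get $B_\rho(x,\nu)\subset B_k(x,\nu)$, not merely $B_k(x,2\nu)$.
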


\begin{theorem}\label{qrhoq}
  Let $G = \Bn$, $x \in G$ and $r > 0$. Then
  \[
    B_q(x,m) \subset B_\rho(x,r) \subset B_q(x,M),
  \]
  where
  \[
    m = \frac{(1-|x|^2)\sinh \frac{r}{2}}{\sqrt{1+|x|^2}\sqrt{(1+|x|^2)\cosh r+2|x|\sinh r}}
  \]
  and
  \[
    M = \frac{(1-|x|^2)\sinh \frac{r}{2}}{\sqrt{1+|x|^2}\sqrt{(1+|x|^2)\cosh r-2|x|\sinh r}}.
  \]

  Moreover, the inclusions are sharp and $M/m \to 1$ as $r \to 0$.
\end{theorem}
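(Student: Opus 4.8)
The plan is to reduce everything to the one-dimensional geometry along the line through $0$ and $x$, exactly as in the proof of Theorem \ref{rhoqrho}, but with the roles of $\rho$ and $q$ interchanged. By the rotational symmetry of $\Bn$ we may assume $x = |x| e_1$. The hyperbolic ball $B_\rho(x,r)$ is, by Proposition \ref{aputulos} (3), a Euclidean ball centered on the $e_1$-axis; the chordal ball $B_q(x,s)$ is, by Proposition \ref{aputulos} (4), also a Euclidean ball centered on the $e_1$-axis (provided $s$ is small enough that the formula applies). Since both families are concentric Euclidean balls about points on the same line, the smallest chordal ball containing $B_\rho(x,r)$ and the largest chordal ball contained in $B_\rho(x,r)$ are determined by the two points where $\partial B_\rho(x,r)$ meets the $e_1$-axis.

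First I would locate those two points. Write $t = \tanh(r/2)$. From Proposition \ref{aputulos} (3) the hyperbolic ball $B_\rho(x,r)$ equals $B^n(c, \varrho)$ with $c = x(1-t^2)/(1-|x|^2 t^2)$ and $\varrho = (1-|x|^2)t/(1-|x|^2 t^2)$; hence its two axis-intercepts are the points $u_\pm = (|c| \pm \varrho)\,e_1$. Equivalently, one computes directly from Proposition \ref{aputulos} (2) that $z = s e_1 \in \partial B_\rho(x,r)$ iff $2\arcsinh\big(|s-|x||/(\sqrt{1-|x|^2}\sqrt{1-s^2})\big) = r$, giving an inner intercept $u_-$ (with $|u_-| < |x|$) and an outer intercept $u_+$ (with $|u_+| > |x|$). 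Then I would invoke Proposition \ref{aputulos} (4) together with Lemma \ref{aputulos} (3)--(4) (the same tool used in Theorem \ref{rhoqrho}: concentric Euclidean balls about a common axis point sandwich each other according to nearest and farthest boundary points) to get
\[
  B_q(x,m) = B^n(x,|x-u_-|) \subset B_\rho(x,r) \subset B^n(x,|x-u_+|) = B_q(x,M),
\]
where $m = q(x,u_-)$ and $M = q(x,u_+)$, and these are sharp because $u_\pm$ lie on $\partial B_\rho(x,r)$.

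It then remains to evaluate $q(x,u_\pm)$ in closed form. Here it is cleanest not to compute $|u_\pm|$ explicitly but to use $|x - u_\pm| = \pm(|u_\pm| - |x|)$ together with the hyperbolic-distance relation $|x-u_\pm| = \sqrt{1-|x|^2}\sqrt{1-|u_\pm|^2}\,\sinh(r/2)$ from Proposition \ref{aputulos} (2), and the identity $1+|u_\pm|^2 = $ (something expressible via $|x|$, $\cosh r$, $\sinh r$) obtained from the Euclidean-center formula above; substituting into $q(x,u_\pm) = |x-u_\pm|/(\sqrt{1+|x|^2}\sqrt{1+|u_\pm|^2})$ and simplifying should produce exactly
\[
  \frac{(1-|x|^2)\sinh\frac{r}{2}}{\sqrt{1+|x|^2}\,\sqrt{(1+|x|^2)\cosh r \mp 2|x|\sinh r}},
\]
with the minus sign for $u_+$ (the farther point, hence the larger chordal radius $M$) and the plus sign for $u_-$ (giving $m$). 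Finally $M/m \to 1$ as $r\to 0$ is immediate since both numerators are $\sim r/2$ and both radicands tend to $1+|x|^2$. I expect the one genuine obstacle to be the bookkeeping in this last algebraic simplification — in particular getting the $\cosh r$ and $\sinh r$ terms to combine correctly and confirming which axis-intercept yields $m$ and which yields $M$; the structural sandwich argument itself is routine given Propositions \ref{aputulos} (3)--(4) and the method already deployed in Theorem \ref{rhoqrho}.
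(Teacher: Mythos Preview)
Your overall plan---reduce to the two axis intercepts $u_\pm$ of $\partial B_\rho(x,r)$, sandwich via Euclidean balls, and evaluate $q(x,u_\pm)$---is exactly the paper's approach. However, your identification of which intercept gives $m$ and which gives $M$ is reversed, and with it the sandwich chain collapses.

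Because the Euclidean centre of $B_\rho(x,r)$ in Proposition~\ref{aputulos}~(3) lies strictly between $0$ and $x$, the axis ordering is $u_-,\,c_\rho,\,x,\,u_+$. Hence $|x-u_+|<|x-u_-|$: the \emph{outer} intercept is the one \emph{closer} to $x$ in Euclidean distance, not farther. (Concretely, with $t=\tanh(r/2)$ one gets $|x-u_+|=t(1-|x|^2)/(1+|x|t)$ and $|x-u_-|=t(1-|x|^2)/(1-|x|t)$.) For the chordal distance both effects push the same way: $|x-u_+|$ is smaller and $\sqrt{1+|u_+|^2}$ is larger, so $q(x,u_+)<q(x,u_-)$. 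Thus $m=q(x,u_+)$ carries the \emph{plus} sign $+2|x|\sinh r$ under the radical and $M=q(x,u_-)$ the minus sign---the opposite of what you wrote.

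Consequently your displayed chain
\[
  B_q(x,m)=B^n(x,|x-u_-|)\subset B_\rho(x,r)\subset B^n(x,|x-u_+|)=B_q(x,M)
\]
is wrong on two counts: the equalities are false (chordal balls are not centred at $x$), and the inclusions go the wrong way since $|x-u_-|>|x-u_+|$. The correct chain, matching the paper, is
\[
  B_q(x,m)\subset B^n(x,|x-u_+|)\subset B_\rho(x,r)\subset B^n(x,|x-u_-|)\subset B_q(x,M),
\]
where the first inclusion holds because the Euclidean centre of $B_q(x,m)$ lies between $x$ and $u_+$ (so $u_+$ is its farthest point from $x$), and the second because $x$ lies between $c_\rho$ and $u_+$ (so $B^n(x,|x-u_+|)$ is internally tangent to $B_\rho(x,r)$ at $u_+$); the last two inclusions are the mirror statements at $u_-$. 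Once this is straightened out, your computation of $q(x,u_\pm)$ via $|x-u_\pm|=\sqrt{1-|x|^2}\sqrt{1-|u_\pm|^2}\,\sinh(r/2)$ and the identity $(1+|x|t)^2+(|x|+t)^2=\big((1+|x|^2)\cosh r+2|x|\sinh r\big)/\cosh^2(r/2)$ goes through and yields the stated $m$ and $M$.
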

\begin{proof}
  We prove first the inclusion $B_q(x,m) \subset B_\rho(x,r)$. Let $y \in \partial \Ball{q}{x}{m}$ with $|y| \ge |z|$ for all $z \in \partial \Ball{q}{x}{m}$. By Lemma \ref{aputulos} (3) and (4)
  \[
    \Ball{q}{x}{m} \subset B^n(x,|x-y|) \subset \Ball{\rho}{x}{r}.
  \]
  Since $q(x,y) = r$ is equivalent to
  \[
    |y| = \frac{2|x|+(1-|x|^2)\sinh r}{1+|x|^2+(1-|x|^2)\cosh r}
  \]
  we obtain $q(x,y) = m$. The radius $m$ is sharp by the selection of $y$.

  We prove next the inclusion $B_\rho(x,r) \subset B_q(x,M)$. Let $y \in \partial \Ball{\rho}{x}{r}$ with $|y| \ge |z|$ for all $z \in \partial \Ball{\rho}{x}{r}$. By Lemma \ref{aputulos} (3) and (4)
  \[
    \Ball{\rho}{x}{r} \subset B^n(x,|x-y|) \subset \Ball{q}{x}{M}.
  \]
  Since $\rho(x,y) = r$ is equivalent to
  \[
    y = \frac{x}{|x|} \frac{2|x|-(1-|x|^2)\sinh r}{1+|x|^2+(1-|x|^2)\cosh r}
  \]
  we obtain $q(x,y) = M$. The radius $M$ is sharp by the selection of $y$.

  Clearly $M/m \to 1$ as $r \to 0$ and the assertion follows.
\end{proof}

\begin{corollary}\label{qkq}
  Let $G = \Bn$, $x \in G$ and $r > 0$. Then
  \[
    B_q(x,m) \subset B_k(x,r) \subset B_q(x,M),
  \]
  where
  \[
    m = \frac{(1-|x|^2)\sinh \frac{r}{4}}{\sqrt{1+|x|^2}\sqrt{(1+|x|^2)\cosh \frac{r}{2}+2|x|\sinh \frac{r}{2}}}
  \]
  and
  \[
    M = \frac{(1-|x|^2)\sinh \frac{r}{2}}{\sqrt{1+|x|^2}\sqrt{(1+|x|^2)\cosh r-2|x|\sinh r}}.
  \]
\end{corollary}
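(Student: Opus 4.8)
The plan is to obtain Corollary \ref{qkq} from the chordal--hyperbolic comparison of Theorem \ref{qrhoq} by trading the hyperbolic metric for the quasihyperbolic one, using the two-sided estimate $k_\Bn \le \rho_\Bn \le 2 k_\Bn$ of Proposition \ref{aputulos}(1). At the level of concentric balls these two inequalities read $B_\rho(x,s) \subseteq B_k(x,s)$ (from $k_\Bn \le \rho_\Bn$) and $B_k(x,s) \subseteq B_\rho(x,2s)$ (from $\rho_\Bn \le 2 k_\Bn$), valid for all $x \in \Bn$ and $s>0$; in particular $B_\rho(x,r/2) \subseteq B_k(x,r) \subseteq B_\rho(x,2r)$.

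For the inner inclusion I would run the chain $B_q(x,m) \subseteq B_\rho(x,r/2) \subseteq B_k(x,r)$, where the first step is Theorem \ref{qrhoq} applied with hyperbolic radius $r/2$; substituting $r/2$ for $r$ in the lower radius of Theorem \ref{qrhoq} reproduces exactly the displayed $m$. For the outer inclusion I would use $B_k(x,r) \subseteq B_\rho(x,2r)$ together with the upper radius of Theorem \ref{qrhoq} and read off $M$ after a routine $\cosh$--$\sinh$ simplification. Sharpness of both inclusions is inherited from Theorem \ref{qrhoq}: the extremal boundary points of the balls involved all lie on the line through $0$ and $x$, so the touching configurations survive the transfer, and $M/m \to 1$ as $r \to 0$ then follows by l'H\^opital's rule exactly as in Theorem \ref{qrhoq} and Corollary \ref{qjq}.

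The step I expect to require the most care is this constant bookkeeping: the two halves of Proposition \ref{aputulos}(1) carry different multipliers, so one must feed the correct multiple of $r$ into Theorem \ref{qrhoq} on each side and then verify that the composite chordal radii genuinely collapse to the stated closed forms for $m$ and $M$. Everything else is a direct, if somewhat tedious, computation.
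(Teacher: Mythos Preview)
Your overall strategy---combine Theorem~\ref{qrhoq} with Proposition~\ref{aputulos}(1)---is exactly the route the paper has in mind, and your treatment of the inner inclusion is fine: the chain $B_q(x,m)\subset B_\rho(x,r/2)\subset B_k(x,r)$ is valid and substituting $r/2$ into the lower radius of Theorem~\ref{qrhoq} does reproduce the displayed $m$.

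The outer inclusion, however, does not go through as you describe. From $\rho_{\Bn}\le 2k_{\Bn}$ you correctly get $B_k(x,r)\subset B_\rho(x,2r)$, and then Theorem~\ref{qrhoq} at hyperbolic radius $2r$ yields
\[
B_k(x,r)\subset B_\rho(x,2r)\subset B_q\!\left(x,\ \frac{(1-|x|^2)\sinh r}{\sqrt{1+|x|^2}\sqrt{(1+|x|^2)\cosh 2r-2|x|\sinh 2r}}\right).
\]
This is \emph{not} the $M$ printed in the corollary; no $\cosh$--$\sinh$ identity collapses $\sinh r,\cosh 2r,\sinh 2r$ back to $\sinh(r/2),\cosh r,\sinh r$. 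The printed $M$ is literally the upper radius of Theorem~\ref{qrhoq} at hyperbolic radius $r$, and obtaining it via your chain would require $B_k(x,r)\subset B_\rho(x,r)$. But $k_{\Bn}\le\rho_{\Bn}$ gives the \emph{opposite} inclusion $B_\rho(x,r)\subset B_k(x,r)$, so that step is simply false for $x\neq 0$. In short, your argument proves an outer inclusion with the larger radius $M_{\text{qrhoq}}(2r)$, not with the stated $M$; the ``routine simplification'' you invoke does not exist, and the discrepancy is a genuine factor-of-two issue in the printed formula rather than something you can algebraically massage away.

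Two smaller points. First, the corollary does not assert sharpness or $M/m\to 1$, so there is nothing to prove there; and in any case sharpness cannot be inherited through Proposition~\ref{aputulos}(1), since the inequality $\rho\le 2k$ is strict away from the origin. Second, for the inner inclusion you are free to use the stronger $B_\rho(x,r)\subset B_k(x,r)$ instead of $B_\rho(x,r/2)\subset B_k(x,r)$; the paper's choice of $r/2$ already gives away a factor of two there as well.
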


Note that in Theorem \ref{qrhoq} and Corollary \ref{qkq}, $\Ball{q}{x}{M} \subset \B^n$ if $M \le (1-|x|)/\sqrt{2(1+|x|^2)}$.


\end{document}